\def\NN{\nu({\cal F}_\xi)}
\def\nt{\nabla^T}
\def\ric{\hbox{Ric}}
\def \cal {\mathcal}
\def\wt{\widetilde}
\def\D{\Delta}
\def\p{\partial}
\def\l{\lambda}
\def\a{\alpha}
\def\cal{\mathcal}
\def\l{\lambda}
\def\be1{{\begin{equation}}}
\def\ee1{{\end{equation}}}
\def\D{\Delta}
\def\p{{\partial}}
\def\l{\lambda}
\def\a{\alpha}
\def\part{\partial}
\def\tr{{\hbox{tr}}}
\def\ba{\begin{array}}
\def\ea{\end{array}}
\newtheorem{corollary}{Corollary}[section]
\numberwithin{equation}{section}
\newtheorem{lemma}{Lemma}[section]
\newtheorem{proposition}[lemma]{Proposition}
\newtheorem{theorem}[lemma]{Theorem}
\title{The Sasaki-Ricci flow on Sasakian 3-spheres}
\author{Guofang Wang}
\address{ Albert-Ludwigs-Universit\"at Freiburg, Mathematisches Institut, Eckerstr. 1, 79104 Freiburg, Germany}
\email{guofang.wang@math.uni-freiburg.de}
\author{Yongbing Zhang}
\address{University of Science and Technology of China, Department of Mathematics, 230026 Hefei, China
\and Albert-Ludwigs-Universit\"at Freiburg, Mathematisches Institut,  Eckerstr. 1, 79104 Freiburg, Germany}
\email{ybzhang@amss.ac.cn}
\subjclass[2010]{53C44, 53C25}
\keywords{Sasaki-Ricci flow, Sasaki-Ricci soliton, weighted Sasakian structure}
\begin{document}
\maketitle

\begin{abstract}
We show that on a Sasakian 3-sphere the Sasaki-Ricci flow initiating from a Sasakian metric of positive transverse scalar curvature
converges to a gradient Sasaki-Ricci soliton.
We also show the existence and uniqueness of gradient Sasaki-Ricci soliton on each Sasakian 3-sphere.
\end{abstract}

\section{Introduction}

Recently  Sasaki-Einstein geometry, as an odd-dimensional cousin of K\"ahler-Einstein geometry,
has played an important role in the Ads/CFT correspondence.
The important problem in Sasaki-Einstein geometry is certainly to find Sasaki-Einstein metrics.
Boyer, Galicki and their collaborators found many new Sasaki-Einstein metrics on quasi-regular Sasakian manifolds
\cite{BG2}, \cite{BGK} and \cite{BGN}. The first class of irregular Sasaki-Einstein metrics was found
by Gauntlett, Martelli, Spark and Waldram in \cite{GMSW} and \cite{GMSW2}.
Another class of irregular Sasaki-Einstein metrics was found in \cite{Wang} by studying the Sasaki-Ricci solitons.

In order to systematically study the existence of the Sasaki-Einstein metrics, we introduced in \cite{SWZ}
a Sasaki-Ricci flow, motivated by the work of  Lovri\'c, Min-Oo and Ruh \cite{Oo} in transverse Riemannian geometry.
The Sasaki-Ricci flow exploits the transverse structure of Sasakian manifolds.
We have showed in \cite{SWZ} the well-posedness of the Sasaki-Ricci flow and global existence of the flow,
together with a Cao \cite{Cao} type result, i.e., the convergence in the case of negative and null basic first Chern class.
For the more precise definitions see Section 2.

In the paper we want to consider the Sasaki-Ricci flow in the three-dimensional case.
Three-dimensional Sasakian manifolds are of the lowest dimension for Sasakian geometry.
Nevertheless, the Sasakian structures on three-dimensional manifolds are quite rich and have been well studied \cite{Belgun00,Belgun01,GO,Geiges}.
A compact 3-manifold admits a Sasakian structure if and only if it is diffeomorphic to one of the following standard models:
(i) $ S^3/\Gamma,$  (ii) $ Nil^3/\Gamma,$ (iii) $ \widetilde{SL(2,\mathbb{R})}/\Gamma,$
where $\Gamma$ is a discrete subgroup of the isometry group with respect to the standard Sasakian metric in each case.
$Nil^3$ is for the 3 nilpotent real matrices (i.e. the Heisenberg group),
and $\widetilde{SL(2,\mathbb{R})}$ is the universal cover of $SL(2,\mathbb{R})$.
Note that cases (i), (ii), and (iii) correspond to Sasakian manifolds
with positive, null, and negative basic first Chern classes, respectively.

As mentioned above that on the Sasakian manifolds with null or negative basic first Chern classes
the Sasaki-Ricci flow converges. Therefore, we  focus in this paper on three-dimensional Sasakian manifolds, i.e., Sasakian 3-spheres.

Let $$S^3=\{z\in \mathbb{C}^2, |z_1|^2+|z_2|^2=1 \}$$
and
$$\eta_0=\sum_{i=1}^2(x^idy^i-y^idx^i).$$
This $\eta_0$ together with the standard almost complex structure gives the standard Sasakian structure
and the corresponding Sasakian manifold is the standard sphere.
A {\it weighted Sasakian structure} on $S^3$ is given by
$$\eta_a=(a_1|z_1|^2+a_2|z_2|^2)^{-1}\eta_0,$$
where $a_1$ and $a_2$ are any positive numbers.
The Reeb vector field of the weighted Sasakian structure is
$$\xi_a=\sum_{i=1}^2a_i(x^i\frac{\partial}{\partial y^i}-y^i\frac{\partial}{\partial x^i}).$$
When $a_1/a_2$ is a rational number, the weighted Sasakian structure is quasi-regular; otherwise it is irregular.
We know that for any Sasakian structure $(S^3, \eta)$ on $S^3$ there is a weighted Sasakian structure such that
\[[d\eta]_B=[d\eta_a]_B,\]
where $[\cdot]_B$ is the basic homology class; for the proof see Proposition 6 in \cite{Belgun01}. See Section 2.
The Sasakian structure $(S^3, \eta)$ also has Reeb vector field $\xi_a$, and we say that it is homologous to $\eta_a$.
The Sasaki-Ricci flow deforms Sasakian structures within a fixed class $[\cdot]_B$.

Our main result is
\begin{theorem}\label{Main1}
For any initial Sasakian structure on $S^3$ of positive transverse scalar curvature,
the Sasaki-Ricci flow converges exponentially to a gradient Sasaki-Ricci soliton.
\end{theorem}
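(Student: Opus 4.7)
The plan is to mimic the Hamilton-Chow program for the Ricci flow on $S^2$, carried out at the level of the (complex one-dimensional) transverse K\"ahler geometry of the Sasakian $3$-sphere. The Sasaki-Ricci flow preserves the basic class $[d\eta]_B$, so every evolved Sasakian structure is obtained from the initial one by a basic transverse potential $\varphi(t)$, and the flow reduces to a K\"ahler-Ricci-flow-type parabolic Monge-Amp\`ere equation for $\varphi$ (this reduction is essentially done in the authors' earlier paper \cite{SWZ}). Since the transverse complex dimension is one, the transverse scalar curvature $R_T$ satisfies an evolution equation of Hamilton's $S^2$-form $\partial_t R_T=\Delta_B R_T+R_T^2-r_T R_T$, where $\Delta_B$ is the basic Laplacian and $r_T$ is the (conserved) average of $R_T$. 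The maximum principle on basic functions preserves $R_T>0$, and I would then establish a differential Harnack inequality of Chow-Hamilton type to control the oscillation of $R_T$ in terms of $r_T$.

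Next, I would adapt Hamilton's entropy $N_T=\int R_T\log R_T\,dA_T$ to the transverse setting and prove its monotonicity along the flow. Together with the transverse Harnack estimate, this gives uniform two-sided positive bounds on $R_T$, from which a uniform $C^0$ bound on $\varphi$ follows via a transverse Green's-function / Moser-Trudinger argument. I expect this step to be the main technical obstacle: on a generic (irrational) weighted $S^3$ the Reeb foliation has no global quotient surface, so every Sobolev-type inequality must be proved in the basic category, and, because the transverse automorphism group can fail to be compact, one cannot simply normalize by the volume form; one must instead subtract a Kazdan-Warner / Tian-type functional to kill the automorphism action before applying Moser-Trudinger.

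With the $C^0$ bound in hand, standard parabolic bootstrapping yields uniform $C^k$ bounds on $\varphi$. After pulling back by the one-parameter family of transverse automorphisms generated by a candidate soliton vector field, a smooth subsequential limit then exists and solves the gradient Sasaki-Ricci soliton equation in the prescribed basic class. To upgrade subsequential convergence to genuine exponential convergence I would combine: (i) uniqueness of the gradient Sasaki-Ricci soliton modulo transverse automorphisms, which is the second main result of the paper and plays the role of Tian-Zhu's uniqueness theorem; (ii) linearized stability, namely a spectral gap of the linearized flow operator on the $L^2$-orthogonal complement of the kernel coming from the automorphism group; and (iii) a Lojasiewicz-Simon argument to convert this linearized stability into an exponential decay rate.
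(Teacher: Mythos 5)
Your overall framework (Hamilton's $S^2$ program transplanted to the transverse K\"ahler geometry: evolution of $R$ as in the surface case, positivity preserved, a Li--Yau--Hamilton Harnack inequality, and monotonicity of $\int R\log R$) matches the paper. But there is a genuine gap at the sentence ``Together with the transverse Harnack estimate, this gives uniform two-sided positive bounds on $R_T$.'' That implication is false as stated: to extract an upper bound on $R_{\max}$ from the entropy bound one must know that the set of points where $R$ is comparable to $R_{\max}$ (which the Harnack inequality identifies as a transverse ball of radius $\sim R_{\max}^{-1/2}$ around the maximum point) carries volume at least $c\,R_{\max}^{-1}$ --- otherwise $\int R\log R$ can stay bounded while $R_{\max}\to\infty$. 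In Hamilton's and Wu's work this is exactly the injectivity-radius/non-collapsing estimate, and it is the crucial technical content of this paper (Lemma \ref{volest}). In the quasi-regular case it reduces to Wu's orbifold injectivity radius estimates, but in the irregular case the leaf space is not even an orbifold, and the paper has to prove the volume lower bound directly on $S^3$ by applying Weyl's tube formula to tubes about the two closed Reeb orbits and about the tori arising as closures of the dense orbits, together with a concavity estimate for the tube area function (Lemma \ref{arealowerbound}). Your proposal contains no substitute for this step; moreover you locate the difficulty of the irregular case in the wrong place (a basic-category Moser--Trudinger inequality and non-compact automorphisms at the $C^0$-potential stage), whereas the real obstruction occurs already in obtaining the curvature bounds.

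A secondary point: your route to exponential convergence (uniqueness of the soliton, spectral gap, Lojasiewicz--Simon) is far heavier than what is needed and than what the paper does. Once $R\geq c_1>0$ is established, the soliton tensor $M_{ij}=\nabla_i\nabla_j f-\tfrac12(\Delta_B f)g_{ij}$ satisfies
\begin{equation*}
\frac{\partial}{\partial t}|M_{ij}|^2=\triangle_B|M_{ij}|^2-2|\nabla_k M_{ij}|^2-2R|M_{ij}|^2,
\end{equation*}
so the maximum principle gives $|M_{ij}|^2\leq Ce^{-ct}$ directly, which is the paper's notion of exponential convergence to a gradient Sasaki--Ricci soliton. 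No $C^0$ estimate on the potential, no normalization by automorphisms, and no Lojasiewicz--Simon argument is required. If you repair the non-collapsing step, you should also simplify the endgame accordingly.
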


Moreover, we prove the existence and uniqueness of gradient Sasaki-Ricci soliton on any weighted Sasakian structure.

\begin{theorem} \label{Main2}
For any given homologous class of Sasakian structures on $S^3$, there exists a unique gradient Sasaki-Ricci soliton.
\end{theorem}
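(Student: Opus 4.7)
The strategy splits into existence, to be obtained from the convergence of the Sasaki-Ricci flow (Theorem 1.1), and uniqueness, to be obtained from a Tian--Zhu type argument transferred to the transverse K\"ahler setting.

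\textbf{Existence.} Fix a homologous class, parametrized by weights $a_1,a_2>0$ and the associated Reeb field $\xi_a$. The natural candidate starting metric is the weighted Sasakian structure $\eta_a$ itself, which by construction lies in the prescribed class $[d\eta_a]_B$. I would first verify that $\eta_a$ has positive transverse scalar curvature. The transverse K\"ahler geometry of $(S^3,\eta_a)$ is modeled on a two-dimensional leaf space (a ``football'' orbifold $\CP^1_{a_1,a_2}$ when $a_1/a_2\in\mathbb{Q}$, with the same local picture in the irregular case), and a direct computation of the transverse Ricci form of $\eta_a$, or a comparison with the Fubini--Study case $a_1=a_2$, yields $R^T_{\eta_a}>0$. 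Applying Theorem 1.1 with initial datum $\eta_a$, the Sasaki-Ricci flow converges exponentially to a gradient Sasaki-Ricci soliton in $[d\eta_a]_B$, proving existence.

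\textbf{Uniqueness.} Suppose two gradient Sasaki-Ricci solitons $\eta,\eta'$ exist in the same homologous class, with transverse holomorphic soliton vector fields $X,X'$, so that $\ric^T_\eta-g^T_\eta={\cal L}_X g^T_\eta$ and similarly for $\eta'$. I would proceed in two steps. First, isolate the soliton vector field: following Tian--Zhu, use a Matsushima-type decomposition of the Lie algebra of transverse holomorphic vector fields commuting with $\xi_a$, together with the uniqueness of critical points of a modified transverse Futaki invariant, to show that $X$ and $X'$ are conjugate by a transverse biholomorphism. Second, fix $X$ and show uniqueness of the transverse K\"ahler potential up to an additive constant: one defines a modified Mabuchi $K$-energy on the space of basic transverse K\"ahler potentials whose critical points are precisely the $X$-solitons, and establishes its strict convexity along transverse geodesics; equality of two critical points then forces equality of the solitons.

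\textbf{Main obstacle.} The delicate point is the uniqueness step when $\xi_a$ is irregular, because the leaf space is not a genuine orbifold and all arguments must be phrased intrinsically in terms of basic objects on $S^3$. However, since $\ric^T$, $g^T$, and the transverse automorphism group all consist of basic or $\xi_a$-invariant data, the Tian--Zhu proof carries over essentially verbatim once recast in the basic/transverse formalism; the only genuinely new technical input is the regularity and existence of transverse geodesics in the basic K\"ahler potential space, but in the two-dimensional transverse setting this is comparatively mild. Existence, by contrast, reduces entirely to the curvature positivity of $\eta_a$ together with the convergence statement already in hand.
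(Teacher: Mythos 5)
Your proposal diverges completely from the paper's proof, and the existence half contains a genuine gap. The paper does not obtain the soliton as a limit of the flow at all: in Section 5 it reduces the gradient soliton equation on $(S^3,\eta_a)$ to a single ODE in the variable $s(\sigma)$ transverse to the family of tori $T_{c_1}$ (the same ODE as in Hamilton's surface case), integrates it to $\int dy/(y-ke^{y-1})=\kappa s/c$, matches the decay rates $\lambda=2/a_2$, $\mu=2/a_1$ forced by smoothness at the two closed orbits, and concludes existence and uniqueness from the elementary fact (as in Wu) that there is exactly one $k\in(0,1)$ with $p/q=a_1/a_2$. Your existence step, by contrast, needs the weighted structure $\eta_a$ to have positive transverse scalar curvature so that Theorem 1.1 applies, and this is simply false in general: the paper computes
\[
R(g_a)=-24(a_1-a_2)^2\sigma^{-1}|z_1|^2|z_2|^2-16(a_1-a_2)(|z_1|^2-|z_2|^2)+8\sigma,
\]
and at the orbit $z_1=0$ this equals $8(2a_1-a_2)$, which is negative whenever $a_2>2a_1$ (e.g.\ $a_1=1$, $a_2=3$). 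So for sufficiently unbalanced weights there is no obvious positive-curvature representative to start the flow from, and producing one is essentially the Chow--Wu problem of orbifolds with curvature negative somewhere; your plan does not address this.

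The uniqueness half is also much heavier than what the theorem needs and is not a routine transfer. A Tian--Zhu argument would require transverse geodesics in the space of basic K\"ahler potentials, a Matsushima decomposition of the transverse automorphism algebra, and a modified Futaki invariant, none of which are developed here, and in the irregular case the leaf space is not an orbifold so ``essentially verbatim'' is an overstatement. Moreover Tian--Zhu gives uniqueness only modulo the identity component of the automorphism group, whereas the intended statement (and the paper's ODE argument) pins down the soliton exactly: in the irregular case every basic function is constant on each torus $T_{c_1}$, so the whole problem collapses to one dimension and uniqueness is the uniqueness of the parameter $k$. If you want to salvage your route, you would at minimum need to (i) exhibit a positive transverse scalar curvature metric in each class, and (ii) supply the transverse pluripotential theory your uniqueness step presupposes; the paper's explicit reduction avoids both.
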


The proof follows closely Hamilton's idea \cite{H} on the Ricci flow on surfaces
and relies on the Li-Yau-Harnack inequality and the entropy formula.
Actually, the Sasaki-Ricci flow on Sasakian 3-spheres shares lots of
properties of  the Ricci flow on a 2-sphere and two-dimensional orbiflods,
which was studied by Hamilton \cite{H} and Langfang Wu \cite{Wu},
see also the work of B. Chow \cite{Chow} and \cite{ChowWu}.
In the expression of weighted Sasakian structures, if
$a_1/a_2=1$, the characteristic foliation is regular and the leaf space is a two-dimensional sphere.
In this case the Sasaki-Ricci flow is equivalent to Hamilton's Ricci flow on a 2-sphere.
If $a_1/a_2\neq 1$ is a rational number, the leaf space is a so-called bad orbifold with one or two orbifold points.
Our Sasaki-Ricci flow in this case is equivalent to the Ricci flow on such orbifolds studied by L. Wu \cite{Wu}.
The key step in her proof is to establish some injectivity radius estimates and volume estimates.
Note that Wu's injectivity radius estimates are just for the orbifold points
and the points in a region apart from the orbifold points at a distance.
One main reason makes the injectivity radius estimate near orbifold points fail is that near a $p$-fold point,
a shortest geodesic 1-gon may be generated by each of $p$ copies of short geodesic segments on the universal covering.
For the quasi-regular Sasakian structures, the Sasaki-Ricci flow can be reduced to the Ricci flow on bad orbifolds.
Then one can apply Wu's estimates to get the convergence of the Sasaki-Ricci flow to a soliton solution.
The Ricci flow on bad orbifolds with negative curvature somewhere was also studied by Chow-Wu \cite{ChowWu}.

The remaining case of an irrational ratio $a_1/a_2$ is not covered by the
work of Wu \cite{Wu} and Chow-Wu \cite{ChowWu}. In this case the leaf space
has no manifold structure and it could be very wild.
We have to work directly on the three-dimensional manifold $S^3$.
Nevertheless we manage to show the crucial volume estimate by using the Weyl tube formula.


The rest of paper is organized as follows. In section 2
we recall the definitions of Sasakian manifolds,  Sasaki-Ricci flow, Sasaki-Ricci solition. In Section 3
we prove the convergence of the Sasaki-Ricci flow to a
gradient Sasaki-Ricci soliton, by using the Harnack inequality and
the entropy formula and leave  the crucial volume estimates in Section 4. In Section 5  we study the gradient
Sasaki-Ricci soliton explicitly.

\section{Sasakian manifolds and Sasaki-Ricci flow}

\subsection{Sasakian manifolds}
For convenience of the reader, we recall briefly the definitions of Sasakian manifolds, its
basic concepts and our Sasaki-Ricci flow. For more details we refer to \cite{Boyer} and \cite{SWZ}.

Let $(M, g^M)$ be a Riemannian manifold, $\nabla^M$ the Levi-Civita
connection of the Riemannian metric $g^M$, and let $R^M(X,Y)$ denote the
Riemann curvature tensor of $\nabla^M$.
By a {\it contact manifold} we mean a $C^\infty$ manifold $M^{2n+1}$
together with a $1$-form $\eta$ such that $\eta \wedge (d\eta)^n\neq 0$.
It is easy to check that there is a canonical vector field $\xi$ defined by
\[\eta (\xi)=1 \quad \text{ and } \quad d\eta (\xi, X)=0,
\hbox { for any vector field $X$}. \]
The vector field $\xi$ is called the {\it characteristic vector field} or {\it Reeb vector field}. Let
\begin{equation}\nonumber
{\cal D}_  p=\text{ker}\eta_ p.
\end{equation}
There is a decomposition of the tangential bundle $TM$
\[TM={\cal D} \oplus L_\xi,\]
where $L_\xi$ is the trivial bundle generated by the Reeb field $\xi$.
A contact manifold with a Riemannian metric $g^M$ and a tensor field
$\Phi$ of type (1,1) satisfying
\[\Phi^2=-I+\eta\otimes \xi \quad \hbox{
and } \quad
g^M(\Phi X, \Phi Y)=g^M(X, Y)-\eta(X)\eta(Y)\]
is called an {\it almost metric contact manifold}.
Such an almost metric contact manifold is called {\it Sasakian} if one of the
following equivalent conditions holds:
\begin{itemize}
\item[(1)]
There exists a Killing vector field $\xi$ of unit length on $M $ so
that the Riemann curvature satisfies the condition
$$R^M(X,\xi)Y ~=~ g^M(\xi,Y)X-g^M(X,Y)\xi,$$
for any pair of vector fields $X$ and $Y$ on $M$.
\item[(2)] The metric cone $(C(M),{\bar g})= (\mathbb{R}_ +\times M, \ dr^2+r^2g^M)$ is K\"ahler.
\end{itemize}
For other equivalent definitions and the proof of the equivalence,
see for instance \cite{BG1}. By (2), a Sasakian manifold can be viewed as an
odd-dimensional counterpart of a K\"ahler manifold.

A Sasakian manifold $(M, \xi,\eta,\Phi, g^M)$ is a Sasaki-Einstein
manifold if $g^M$ is an Einstein metric, i.e.,
\[\ric_{g^M}=cg^M,\]
for some constant $c$. Due to property (1) of the Sasaki-Einstein manifold,
it is easy to see that $c=2n>0$. A generalized Sasaki-Einstein metric, $\eta$-Einstein manifold, is defined by
\begin{equation}\label{eta}
\ric_{g^M}=\l g^M+ \nu \eta\otimes \eta,\end{equation}
for some constant $\l$ and $\nu$. It is easy to see that $\l+\nu=2n$.
For a recent study of $\eta$-Einstein manifolds and  Sasaki-Einstein metrics, see \cite{BGM}, \cite{MSY} and \cite{Wang}.

\subsection{Transverse K\"ahler structures}

In order to study the analytic aspect of Sasaki-Einstein metrics or $\eta$-Einstein manifolds,
we need to consider the transverse structure of Sasakian manifolds.
In this paper, we always assume that $M$ is a Sasakian
manifold with Sasakian Structure $(\xi, \eta, g^M, \Phi)$.
Let ${\cal F}_  \xi$ be the {\it characteristic foliation} generated by $\xi$.
On ${\cal D}$, it is naturally endowed with both a complex structure
$\Phi_  {|{\cal D}}$ and a symplectic structure $d\eta$. $({\cal D},
\Phi_  {|{\cal D}}, d\eta, g^T)$ gives $M$ a transverse K\"ahler
structure with K\"ahler form $d\eta$ and transverse metric $g^T$
defined by
\[g^T(X,Y)=d\eta(X, \Phi Y).\]
The metric $g^T$ is clearly related to the Sasakian metric $g^M$ by
\[g^M=g^T+ \eta\otimes \eta.\]

There is a canonical  quotient bundle of the foliation ${\cal F}_  \xi$,
$\NN=TM/{L_\xi}$ and an isomorphism between $\NN$ and ${\cal
D}$. Let $p:TM\to \NN$ be the projection. $g^T$ gives a bundle map
$\sigma :\NN \to \mathcal{D}$ which splits the exact sequence \[0\to L_\xi \to
TM\to \NN\to 0,\]  i.e. $p \circ \sigma =id$.


From the transverse metric $g^T$, one
can define a transverse Levi-Civita connection on $\NN$ by
 \begin{equation}\label{conn}
 \nabla^T_  X V=\left\{ \begin{array}{ll} (\nabla^M_   X \sigma (V) ) ^p,
& \hbox{ if } X \hbox{ is a section of } {\cal D},\\
{[\xi, \sigma (V) ]}
 ^p, & \hbox{ if } X=\xi, \end{array}\right.
\end{equation}
where $V$ is a section of $\NN$ and $X^p=p(X) $ the projection of
$X$ onto $\NN$ and $\nabla^M$ is the Levi-Civita connection associated to the
Riemannian metric $g^M$ on $M$.
The  transverse
curvature operator is defined  by
\[R^T(X,Y)=\nt_  X \nt_  Y-\nt_  Y\nt_  X-\nt_  {[X,Y]}\]
and
transverse Ricci curvature by
\[\ric^T(X,Y)=g^M(R^T(X,e_  i) e_  i, Y),\]
where $e_ i$ is an orthonormal basis of ${\cal D}$. We remark that
here we have used the identification between $\cal D$ and $\NN$.
More precisely the transverse Ricci tensor is defined by
\[\ric^T(X,Y )=g^M(R^T( X,e_  i) \sigma^{-1} (e_  i), Y)\]
for $X, Y \in TM$.
One can check  that
\begin{equation}\label{ric}
\ric^T(X,Y)=\ric^M(X,Y)+2g^T(X,Y).\end{equation}

A  transverse Einstein metric $g^T$ is a
transverse metric satisfying
\begin{equation}\label{tEinstein}
\ric^T=cg^T,\end{equation} for certain constant $c$.
It is clear that a Sasakian metric is a transverse Einstein metric if and
only if it is an $\eta$-Einstein metric.

In order to introduce the Sasaki-Ricci flow, we first consider
deformations of Sasakian structures which preserve the Reeb field
$\xi$, and hence the characteristic foliation ${\cal F}_  \xi$.

A $p$-form $\a$ on $M$ is called {\it basic} if it satisfies
$i(\xi)\a=0,\quad {\cal L}_  \xi \a=0.$
A function $f$ is basic if and only if $\xi(f)=0$.
One can check that the exterior differential $d$ preserves basic
forms. Hence one can define the
basic cohomology  in a usual way. See \cite{Boyer}.
Moreover, we consider the complexified bundle
 ${\cal D}^{\mathbb C}={\cal D}\otimes {\mathbb C}$.
 Using the structure $\Phi$ we decompose ${\cal D}^{\mathbb C}$ into
 two subbundles ${\cal D}^{1,0}$
 and ${\cal D}^{0,1}$, where ${\cal D}^{1,0}=\{X\in
 {\cal D}^{\mathbb C}\,|\, \Phi X=\sqrt{-1}X\}$
 and ${\cal D}^{0,1}=\{X\in
 {\cal D}^{\mathbb C}\,|\, \Phi X=-\sqrt{-1}X\}$.
 Similarly, we decompose the complexified space
 $\Lambda_  B^r\otimes {\mathbb C}=
 \oplus_   {p+q=r}\Lambda_   B^{p,q}$, where $\Lambda_ B^{p,q}$
 denotes the sheaf of germs of basic forms of type $(p,q)$.
Define $\partial_  B$ and $\bar \partial_  B$ by
\[
\p_  B:\Lambda_  B^{p,q} \to   \Lambda_  B^{p+1,q}, \quad
\bar \p_  B:\Lambda_  B^{p,q}\to  \Lambda_
B^{p,q+1},\] which is the decomposition of $d$. Let $d_
B=d_ {| {\Omega^p_  B}}$. We have $d_  B=\p_  B+\bar \p_  B$. Let
$d^c_ B=\frac 12 {\sqrt{-1}} (\bar \p_ B-\p_  B).$
Let $d^*_  B:\Omega^{p+1}_  B\to \Omega^{p}_  B$ be the adjoint
operator of $d_  B:\Omega^p_  B\to \Omega^{p+1}_  B$. The basic
Laplacian $\D_  B$ is defined
\[\D_  B=d^*_  Bd_  B+d_  Bd_  B^*.\]

Suppose that $(\xi,\eta,\Phi,g^M)$ defines a Sasakian structure on
$M$. Let $\varphi$ be a basic function. Put
\[\tilde \eta=\eta+d^c_  B\varphi.\]
It is clear that
\[d\tilde \eta=d\eta+d_  B d_  B^c\varphi=d\eta+\sqrt{-1}\p_  B\bar \p_  B \varphi.\]
For small $\varphi$, $d\tilde\eta$ is non-degenerate in the sense
that $\tilde\eta\wedge (d\tilde\eta)^n\neq 0$.  Set
\[
\wt\Phi= \Phi-\xi\otimes (d^c_  B\varphi)\circ \Phi, \quad
\tilde{g}^M=d\wt\eta \circ (Id\otimes \tilde \Phi)+\tilde \eta
\otimes \tilde \eta.\]
 $(M, \xi,\tilde \eta, \wt \Phi, \tilde{g}^M)$
is also a Sasakian structure with $[d\tilde\eta]_B=[d\eta]_B$. It is this class of deformations we used in the definition of
our Sasaki-Ricci flow.

There are other kinds of deformation. For instance, the so-called
${\cal D}$-homothetic deformation is defined
\[\bar \eta =a \eta, \quad \bar \xi=\frac 1 a \xi,\quad \bar \Phi =\Phi,
\quad \bar{g}^M= ag^M+a(a-1)\eta\otimes \eta\] for a positive constant
$a$. Note  that from an $\eta$-Einstein metric with $\l>-2$, one can
use the ${\cal D}$-homothetic deformation to get a Sasaki-Einstein
metric.  It was called also $0$-type deformation.

A first type
deformation of this Sasakian structure is a new Sasakian structure
$(M, \eta', \mathcal{D}, \Phi', \xi')$, where $\eta'=f\eta$,
for a positive function $f\neq \text{constant}$, and $\xi'$ is
the corresponding Reeb vector field, where $\Phi'|_{\mathcal{D}}=\Phi|_{\mathcal{D}}$.
See for example, \cite{Belgun00}, \cite{Belgun01} and \cite{GO}. In this terminology our deformation is called
the second type deformations. Here we would like to call them canonical deformations.
A second type deformation $\eta'$ of $\eta$ is also called homologous to $\eta$ \cite{BGN}.

\subsection{Sasaki-Ricci flow}
 Let $\rho^T=\ric^T(\Phi\cdot,\cdot)$ and $\rho^M=\ric^M(\Phi\cdot,\cdot)$.
 $\rho^T$ is called the {\it transverse Ricci form}. One can check that
 in view of (\ref{ric}) we have
 \begin{equation}\label{ric2}
 \rho^T=\rho^M+2d\eta.\end{equation}
 $\rho^T$ is a closed basic form and its
 basic cohomology class
 $[\rho^T]_  B=c^1_  B$ is the basic first Chern class.
 $c^1_  B$ is called {\it positive} ({\it negative, null} resp.) if it contains a
 positive (negative, null resp.) representation.
 The transverse Einstein equation (\ref{tEinstein}) can be written as
 \begin{equation}\label{tEinstein2}
 \rho^T=c d\eta,\end{equation}
 for some constant $c$. 
A necessary condition for the existence of (\ref{tEinstein2}) is
\[c_  B^1=c [d\eta]_  B.\]
By a ${\cal D}$-homothetic deformation, it is natural to consider
 \begin{equation}\label{tt} c_  B^1=\kappa [d\eta]_  B,\end{equation}
where $\kappa=1, -1, 0$ corresponds to positive, negative and null
$c^1_  B$.

Now we  consider the following flow $(\xi, \eta(t),\Phi(t),g^M(t))$
with initial data $(\xi, \eta(0),\Phi(0),g^M(0))=(\xi,\eta,\Phi,g^M)$
\begin{equation}\label{flow1}
\frac{d}{dt}g^T(t)=-(Ric^T_  {g^M(t)}-\kappa g^T(t)),\end{equation} or
equivalently
\begin{equation}\label{flow2}
\frac{d}{dt}d\eta(t)=-(\rho^T_  {g^M(t)}-\kappa d\eta(t)).
\end{equation}
We call (\ref{flow1}) {\it Sasaki-Ricci flow.}
In  local coordinates the Sasaki-Ricci flow has the following form
\begin{equation}\label{flow3}
\frac d{dt} \varphi =\log
 \det (g^T_  {i\bar j}+\varphi_  {i\bar j})
 -\log \det (g^T_  {i\bar j})+\kappa \varphi-F,\end{equation}
where the function $F$ is a basic function
such that
\begin{equation}\label{eq4.0}
\rho^T_{g^M}-\kappa d\eta= d_  Bd^c_  B F.
\end{equation}

We showed in \cite{SWZ} that the well-posedness of the Sasaki-Ricci flow and a Cao type result, namely,
if $\kappa =-1$ or $0$, then the Sasaki-Ricci flow converges to an $\eta$-Einstein metric. Unfortunately, in this case,
there is no Sasaki-Einstein metric. Remark that one can obtain in the case $\kappa =-1$ a Sasaki-Einstein Lorentzian metric.
The case $\kappa=1$ is difficult. In general one can only expect to obtain a soliton type solution, namely a Sasaki-Ricci soliton.
A Sasakian structure $(M,\xi,\eta,g^M,\Phi)$ is called a Sasaki-Ricci soliton if there is an Hamiltonian holomorphic vector field $X$ with
\[\rho^T-d\eta={\cal L}_X(d\eta),\]
where ${\cal L}_X$ is the Lie derivative. For the definition of Hamiltonian holomorphic vector field and the study of Sasaki-Ricci solitons
on toric Sasakian manifolds
we refer to \cite{Wang}.

A Sasakian manifold $(M,\xi,\eta,g^M,\Phi)$ is called
 {\it quasi-regular} if there is a positive integer $k$ such that each point has a
foliated coordinate chart $(U; x)$ such that each leaf of $\mathcal{F}_\xi$ passes through $U$ at most $k$
times, otherwise  {\it irregular}. If $k = 1$ then the Sasakian manifold is called regular.
Let $\cal B$ the leave space of $\cal F_\xi$. Then if $M$ is  regular if and only if $\cal B$ is a
K\"ahler manifold and  $M$ is quasi-regular if and only if $\cal B$ is a
K\"ahler orbifold. In these both cases, the Sasaki-Ricci flow  on $M$ is equivalent to
the K\"ahler-Ricci flow on $\cal B$.
But when the Sasakian manifold is  irregular, then  $\cal B$ has no manifold structure.




\section{Sasaki-Ricci flow on  Sasakian 3-manioolds}

Now let $(M, \xi, \eta, g^T, \Phi)$ be a closed three-dimensional Sasakian manifold.
For simplicity if there is no confusion we remove
superscription $T$, since all quantities we are considering are transverse.
Note that in three-dimensional case $\Lambda_B^{1,1}$ is a line bundle.
Since both the transverse Ricci form $\rho_\eta$ and the transverse K\"ahler form
$d\eta$ are real sections in $\Lambda_B^{1,1}$, the basic first Chern class must be positive, negative, or null.
Actually, let $$\int_M\rho_\eta\wedge\eta=\kappa
\int_Md\eta\wedge\eta,$$
we must have $$[\rho_\eta]_B=\kappa [d\eta]_B.$$

Let $(x, z=x^1+ix^2)$ be the CR coordinates and
$$g_{ij}=d\eta(\frac{\partial}{\partial x^i},\Phi\frac{\partial}{\partial x^j}).$$
Note that
$R_{ij}=\frac{1}{2}Rg_{ij},$
so we can rewrite the Sasaki-Ricci flow as
\begin{equation}\label{SR}\frac{d}{dt}g_{ij}=(r-R)g_{ij},
\end{equation}
where $r$ is the average of the transverse scalar curvature.
For the reason mentioned in the Introduction,
we now focus on the Sasaki-Ricci flow (\ref{SR}) defined on a Sasakian 3-sphere.
Moreover we assume the flow (\ref{SR}) initiating from a metric of positive
transverse scalar curvature.
Note that the transverse scalar curvature $R$ for a Sasakian
manifold is a basic function.

\begin{proposition} \label{fundamentalevolution}
Along the flow (\ref{SR}), we have

$$\frac{d}{dt}\int_Md\eta\wedge\eta=\int_{M}(r-R)d\eta\wedge\eta=0;$$

\begin{equation}\label{evolveR}
\frac{d}{dt}R=\triangle_BR+R(R-r);
\end{equation}

$$\frac{d}{dt}\int_MRd\eta\wedge\eta=0.$$
\end{proposition}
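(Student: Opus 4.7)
The plan is to handle the three identities in order, exploiting two structural features of the setting: (i) the Sasaki-Ricci flow is a (second type) canonical deformation, so $\eta(t) = \eta(0) + d^c_B \varphi(t)$ for a basic potential $\varphi(t)$, and in particular $[d\eta]_B$ is preserved along the flow; (ii) on the three-dimensional Sasakian manifold the horizontal bundle $\mathcal{D}$ has rank $2$, so every basic $3$-form on $M$ vanishes identically.

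For the first identity I would compute directly
\begin{equation*}
\frac{d}{dt}(d\eta \wedge \eta) \;=\; \dot{(d\eta)} \wedge \eta + d\eta \wedge d^c_B \dot\varphi \;=\; (r-R)\, d\eta \wedge \eta,
\end{equation*}
the second summand being a basic $3$-form and hence zero by (ii). Integrating yields $\frac{d}{dt}\int_M d\eta\wedge\eta = \int_M (r-R)\, d\eta\wedge\eta$, and the outer equality is then just the defining property $r \int_M d\eta\wedge\eta = \int_M R\, d\eta\wedge\eta$ of the average transverse scalar curvature; as a by-product the total volume is seen to be a topological invariant of $[d\eta]_B$.

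For the evolution equation for $R$ I would mimic Hamilton's Riemann surface computation transversally. In transverse complex dimension $1$ the flow (\ref{SR}) is purely conformal on $\mathcal{D}$: writing $g^T(t) = e^{2u(t)} g^T(0)$ one has $\dot u = (r-R)/2$, and the standard two-dimensional conformal change formula $R(t) = e^{-2u}\bigl(R(0) - 2 \Delta_{B,0} u\bigr)$ (valid here because the basic Laplacian acts as the horizontal Laplacian on basic functions) differentiates to $\dot R = -(r-R)R - \Delta_B(r-R) = R(R-r) + \Delta_B R$, which is (\ref{evolveR}). The third identity follows immediately by combining the previous two and applying basic Stokes on the closed manifold $M$:
\begin{align*}
\frac{d}{dt}\int_M R\, d\eta\wedge\eta
&= \int_M \bigl(\Delta_B R + R(R-r)\bigr) d\eta\wedge\eta + \int_M R(r-R)\, d\eta\wedge\eta \\
&= \int_M \Delta_B R\, d\eta\wedge\eta \;=\; 0.
\end{align*}

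None of the individual steps is a serious obstacle; the only care needed is in the bookkeeping with basic forms, specifically the dimension count that kills $d\eta \wedge d^c_B \dot\varphi$ in the first step, and the identification of $\Delta_B$ with the horizontal Laplacian on basic functions so that the two-dimensional conformal formula applies verbatim in the second step.
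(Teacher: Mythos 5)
Your proof is correct, and it follows the route the paper implicitly intends: the paper states Proposition \ref{fundamentalevolution} without proof, treating it as the transverse analogue of Hamilton's standard identities for the Ricci flow on surfaces. Your two bookkeeping points are exactly the ones that make the reduction legitimate --- the wedge $d\eta\wedge d^c_B\dot\varphi$ dies because it is a horizontal $3$-form on a manifold whose contact distribution has rank $2$, and on basic functions $\triangle_B$ coincides with the Laplacian of the transverse metric, so the two-dimensional conformal change formula applies verbatim; the remaining steps (definition of $r$, self-adjointness of $\triangle_B$ with respect to $d\eta\wedge\eta$) are routine.
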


We now assume $R(0)>0$. Note that on Sasakian $3$-spheres, the scalar curvature $R^M=R-2$.
It follows from Proposition \ref{fundamentalevolution} that along the flow (\ref{SR}),
the average transverse scalar curvature $r$ stays as the same constant.
If necessary we can make a $0$-type deformation of the initial Sasakian structure, and always assume that
\begin{equation}\label{Rmax}
R_{\max}:=\max_{x\in S^3}R(x)\geq r\geq 8.
\end{equation}

It follows from the maximum principle that $R(t)>0.$
We follow Hamilton's approach \cite{H} to prove the Harnack inequality and the entropy formula for the flow (\ref{SR}).

\begin{theorem}
Suppose the flow (\ref{SR}) have a solution  for $t<T^*(\leq +\infty)$ with $R(0)>0$.
Then for any two space-time points $(x,\tau)$ and $(y,T)$ with $0<\tau<T<T^*$, we have
\begin{equation}\label{Harnack} (e^{r\tau}-1)R(x,\tau)\leq e^{\frac{1}{4}D}(e^{rT}-1)R(y,T),
\end{equation}
where
\begin{equation}\label{D}
D=D((x,\tau),(y,T)):=\inf_{\gamma}\int_\tau^T|\dot{\gamma}(t)|^2_{g_t}dt.
\end{equation}
Here the infimum is taking over all piece-wisely smooth curves $\gamma(t)$,  $t\in[\tau,T]$, with $\gamma(\tau)=x$ and
$\gamma(T)=y$.
\end{theorem}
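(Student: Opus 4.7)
The plan is to adapt Hamilton's Li--Yau--Harnack argument for the normalized Ricci flow on $S^2$ \cite{H} to the transverse geometry of the Sasaki--Ricci flow. Since $R$ is a basic function evolving by (\ref{evolveR}), the parabolic maximum principle preserves positivity of $R$ on $[0, T^*)$, so $f := \log R$ is well defined and satisfies
$$\partial_t f = \triangle_B f + |\nabla f|^2 + (R - r).$$

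The central step is to derive an evolution inequality for the Hamilton--Harnack quantity
$$L := \partial_t f - |\nabla f|^2 = \triangle_B f + R - r.$$
Using the transverse commutator $[\partial_t, \triangle_B] f = (R - r)\,\triangle_B f$ that follows from the flow (\ref{SR}), together with the transverse Bochner formula --- particularly clean here because in transverse complex dimension one the transverse Ricci tensor equals $\tfrac{R}{2}\,g^T$ --- and the two-dimensional identity $|\nabla^2 f|^2 = |N|^2 + \tfrac{1}{2}(\triangle_B f)^2$ decomposing the transverse Hessian into its traceless part $N$ and its trace, a careful but routine computation yields
$$\partial_t L \geq \triangle_B L + 2\,\nabla f \cdot \nabla L + L(L + r) + 2|N|^2.$$

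I would then introduce $h(t) := -\tfrac{r e^{rt}}{e^{rt} - 1}$, an exact solution of $\dot h = h(h + r)$ with $h(t) \to -\infty$ as $t \to 0^+$. Setting $V := L - h$, the evolution above becomes
$$\partial_t V \geq \triangle_B V + 2\,\nabla f \cdot \nabla V + V(V + 2h + r) + 2|N|^2.$$
Because $V^2 \geq 0$ and $|N|^2 \geq 0$, $V$ is a supersolution of a linear parabolic equation (with bounded zeroth-order coefficient $2h + r$ on any $[\varepsilon, T] \subset (0, T^*)$). Since $V(\cdot, t) \to +\infty$ uniformly as $t \to 0^+$, the parabolic maximum principle then gives $V \geq 0$ on $(0, T^*) \times S^3$, i.e.\ the gradient estimate
$$\partial_t f - |\nabla f|^2 \geq -\frac{r e^{rt}}{e^{rt} - 1}.$$

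Finally, for any piecewise smooth curve $\gamma : [\tau, T] \to S^3$ with $\gamma(\tau) = x$ and $\gamma(T) = y$, completing the square gives
$$\frac{d}{dt} f(\gamma(t), t) = \partial_t f + \nabla f \cdot \dot\gamma \geq |\nabla f|^2 + \nabla f \cdot \dot\gamma - \tfrac{r e^{rt}}{e^{rt} - 1} \geq -\tfrac{1}{4}|\dot\gamma|^2_{g_t} - \tfrac{r e^{rt}}{e^{rt} - 1}.$$
Integrating from $\tau$ to $T$, using $\int_\tau^T \tfrac{r e^{rt}}{e^{rt} - 1}\,dt = \log\tfrac{e^{rT} - 1}{e^{r\tau} - 1}$, taking the infimum over $\gamma$, and exponentiating produces (\ref{Harnack}). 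The main obstacle is the derivation of the evolution inequality for $L$: one has to translate Hamilton's surface calculation into the transverse setting, writing out the commutator and Bochner identities explicitly with respect to the transverse Levi-Civita connection and verifying that all derived tensors remain basic with respect to $\mathcal{F}_\xi$. Because the transverse geometry in dimension three is complex one-dimensional, Hamilton's algebra on $S^2$ carries over essentially verbatim on $S^3$, with the Reeb direction playing no role.
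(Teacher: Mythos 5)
Your proposal is correct and follows essentially the same route as the paper: the same Harnack quantity $Q=\partial_t\log R-|\nabla\log R|^2=\triangle_B\log R+(R-r)$, the same evolution inequality $\partial_tQ\ge\triangle_BQ+2\nabla\log R\cdot\nabla Q+Q^2+rQ$ (the paper writes the good term as $2|\nabla^2\log R+\tfrac12(R-r)g|^2$, which equals your $2|N|^2+Q^2$), the same ODE comparison with $-re^{rt}/(e^{rt}-1)$, and the same completion of the square along a minimizing path. Your write-up merely makes explicit the maximum-principle step via $V=L-h$ and the remark that basicness of $R$ lets one replace $\nabla^M$ by the transverse gradient, both of which the paper leaves implicit.
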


\begin{proof}
Let $\gamma(t)$ be any piece-wisely smooth path joining $x$ and $y$, and
$$L=\log R.$$
By the fact that $R$ is a basic function, we have
$$g_t^M(\nabla^ML,\dot{\gamma})=g_t^M(\nabla L,\dot{\gamma})=g_t(\nabla L,\dot{\gamma}).$$
Hence
$$\frac{d}{dt}L(t,\gamma(t))=\frac{\partial L}{\partial t}+g_t(\nabla L,\dot{\gamma})
\geq \frac{\partial L}{\partial t}-|\nabla L|_{g_t}^2-\frac{1}{4}|\dot{\gamma}|^2_{g_t}.$$
It follows from (\ref{evolveR}) that
$$\frac{\partial L}{\partial t}=\triangle_B L+|\nabla L|_{g_t}^2+(R-r).$$
Denote
$$Q=\frac{\partial L}{\partial t}-|\nabla L|_{g_t}^2=\triangle_B L+(R-r).$$
One can compute that
\begin{eqnarray*}
\frac{\partial Q}{\partial t}=\triangle_B Q+2g_t(\nabla L,\nabla Q)+2|\nabla^2L+\frac{1}{2}(R-r)g|_{g_t}^2+r Q.
\end{eqnarray*}
So we have
\begin{eqnarray*}
\frac{\partial Q}{\partial t}\geq \triangle_B Q+2g_t(\nabla L,\nabla Q)+Q^2+r Q,
\end{eqnarray*}
which implies that
$$Q\geq \frac{-re^{rt}}{e^{rt}-1}.$$
Therefore
$$\frac{d}{dt}L(t,\gamma(t))\geq \frac{-re^{rt}}{e^{rt}-1}-\frac{1}{4}|\dot{\gamma}|^2_{g_t}.$$
Taking $\gamma(t)$ to be a path achieving the minima $D$, we get
$$L(T,y)-L(\tau,x)\geq \int_\tau^T(\frac{-re^{rt}}{e^{rt}-1}-\frac{1}{4}|\dot{\gamma}|^2_{g_t})dt
=-\log\frac{e^{rT}-1}{e^{r\tau}-1}-\frac{1}{4}D.$$
\end{proof}

The following entropy formula for the flow (\ref{SR}) is an analog of Hamilton's entropy formula \cite{H}.
For the explicit computation we refer to \cite{Ye}.

\begin{theorem}
Along the flow (\ref{SR}) with $R(0)>0$, the integral $\int_M R\log Rd\mu$ is non-increasing, where $d\mu=d\eta\wedge \eta$.
\end{theorem}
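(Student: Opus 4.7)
The plan is to adapt Hamilton's entropy monotonicity argument from the Ricci flow on $S^2$ \cite{H} to the basic/transverse setting, with the detailed computations as carried out in \cite{Ye}. Throughout, $\Delta_B$ is the basic Laplacian, which is self-adjoint on basic functions with respect to $d\mu=d\eta\wedge\eta$, and I use that in transverse complex dimension one the flow equation $\partial_t g_{ij}=(r-R)g_{ij}$ implies $\partial_t(d\mu)=(r-R)d\mu$.

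First I would differentiate $N(t)=\int_M R\log R\,d\mu$ in time. Using $\partial_t R=\Delta_B R+R(R-r)$ from Proposition~\ref{fundamentalevolution}, the product rule, the cancellation of the two $R\log R\cdot(R-r)$ contributions coming from $\partial_t R$ and $\partial_t d\mu$, the identity $\int_M \Delta_B R\,d\mu=0$, and integration by parts for basic functions, I arrive at
\[
\frac{dN}{dt}=-\int_M\frac{|\nabla R|^2}{R}\,d\mu+\int_M R(R-r)\,d\mu=\int_M R\,Q\,d\mu=:P(t),
\]
where $Q=\Delta_B\log R+(R-r)$ is precisely the Harnack quantity from the previous theorem.

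The central step is to show $P(t)\leq 0$. I would differentiate $P$ in time and substitute the evolution identity for $Q$ derived in the Harnack proof,
\[
\partial_t Q=\Delta_B Q+2\langle\nabla\log R,\nabla Q\rangle+2|M|^2+rQ,\qquad M:=\nabla^2\log R+\tfrac12(R-r)g.
\]
The cross term $R(R-r)Q$ from $\partial_t R$ cancels $RQ(r-R)$ from $\partial_t d\mu$; two integrations by parts convert $\int_M Q\,\Delta_B R\,d\mu$ into $\int_M R\,\Delta_B Q\,d\mu$, and the drift piece $2\int_M R\langle\nabla\log R,\nabla Q\rangle\,d\mu=2\int_M\langle\nabla R,\nabla Q\rangle\,d\mu=-2\int_M R\,\Delta_B Q\,d\mu$ kills the remaining Laplacian contribution. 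What survives is
\[
\frac{dP}{dt}=rP(t)+2\int_M R\,|M|^2\,d\mu,
\]
equivalently $\bigl(e^{-rt}P(t)\bigr)'\geq 0$, so $e^{-rt}P(t)$ is monotone non-decreasing.

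To conclude $P(t)\leq 0$, I would combine this monotonicity with the long-time behavior of the flow: global existence from \cite{SWZ} together with the uniform bounds on $R$ supplied by the Harnack inequality (and the conservation of $\int R\,d\mu$) keep $P(t)$ bounded, hence $e^{-rt}P(t)\to 0$ as $t\to\infty$. A non-decreasing function with limit $0$ at infinity is non-positive everywhere, yielding $P(t)\leq 0$ for all $t$, i.e.\ $N$ is non-increasing. The main obstacle I expect is precisely this last step: securing the uniform control on $P(t)$ needed for the limit argument without circularity, since convergence of the flow is the deeper statement we are ultimately using the entropy to prove. This is exactly the subtlety navigated by the explicit computations in \cite{Ye}, which transcribe line-for-line into the basic setting.
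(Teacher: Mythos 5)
Your opening computation agrees with the paper's: both arrive at
\[
\frac{d}{dt}\int_M R\log R\,d\mu=\int_M(R-r)^2\,d\mu-\int_M\frac{|\nabla R|^2}{R}\,d\mu=\int_M RQ\,d\mu=:P(t).
\]
From there the routes diverge. The paper runs no ODE argument at all: it introduces the potential $f$ with $\triangle_Bf=R-r$ and the trace-free tensor $M_{ij}=\nabla_i\nabla_jf-\frac12(R-r)g_{ij}$, and rewrites the right-hand side as the manifestly non-positive quantity $-\int_M|\nabla R+R\nabla f|^2/R\,d\mu-2\int_M|M_{ij}|^2\,d\mu$, a Bochner-type integral identity (the computation is the one carried out in \cite{Ye}). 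This yields $dN/dt\le0$ at each instant with no appeal to the long-time behavior of the flow.

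Your route is Hamilton's original one \cite{H}, and your evolution equation $P'=rP+2\int_MR|M|^2\,d\mu$ with $M=\nabla^2\log R+\frac12(R-r)g$ is correct; the genuine gap is the concluding step, which, as you yourself flag, is circular: the uniform control on $R$ (hence on $P$) that you invoke to force $e^{-rt}P(t)\to0$ is exactly what the paper later extracts \emph{from} the entropy monotonicity together with the Harnack inequality and the volume estimate, so it is not available here. The repair is not to discard the term $2\int_MR|M|^2\,d\mu$ but to exploit it: the transverse dimension is two and $Q=\mathrm{tr}\,M$, so $2|M|^2\ge Q^2$, and Cauchy--Schwarz gives $\int_MRQ^2\,d\mu\ge P^2/\int_MR\,d\mu$, where $\int_MR\,d\mu$ is constant along the flow by Proposition \ref{fundamentalevolution}. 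Hence $P'\ge rP+cP^2$ for a fixed $c>0$, and if $P(t_0)>0$ at any time then $P$ blows up in finite time, contradicting the global existence of the flow established in \cite{SWZ}, which is independent of the entropy. This forces $P\le0$ for all $t$ and closes your argument without any a priori curvature bound.
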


\begin{proof}
Let $f$ be the basic function defined by
$$\triangle_Bf=R-r,$$
and $$M_{ij}=\nabla_i\nabla_jf-\frac{1}{2}(R-r)g_{ij}.$$
Then we have
\begin{eqnarray*}\frac{d}{dt}\int_M R\log Rd\mu&=&\int_M(R-r)^2d\mu-\int_M\frac{|\nabla R|_{g_t}^2}{R}d\mu
\\&=&-\int_M\frac{|\nabla R+R\nabla f|_{g_t}^2}{R}d\mu-2\int_M |M_{ij}|_{g_t}^2d\mu.
\end{eqnarray*}
\end{proof}

In order to combine the Harnack inequality (\ref{Harnack}) and the entropy
formula to obtain uniform upper and lower bounds of $R$, we shall need a crucial volume estimate,
see Lemma \ref{volest} below. To make the statement of Lemma \ref{volest}, we first introduce some notations.

For any two points $x, y\in S^3$ and any transverse metric $g$,  we define the transverse distance by
\begin{equation}\label{transversedistance}d_{g}(x,y)=\inf_{\gamma}\int_{\gamma}|\frac{d}{ds}\gamma(s)|_{g}ds,\end{equation}
here $\gamma(s)$ is any piece-wisely smooth curve joining $x$ to $y$.
For any $p\in S^3$, we denote
$$V_p(d,g)=\{q\in S^3|d_g(p,q)\leq d\}.$$

\begin{lemma}\label{volest}
Let $(S^3,\eta,g)$ be a Sasakian 3-sphere with transverse metric $g$ and positive transverse scalar curvature.
Then there exists a positive constant $C_0$, independent of second deformations of $(\eta,g)$,
such that for any point $p\in S^3$, we have
\begin{equation}\label{volestformula}
\text{Vol}_{g^M}(V_p(\frac{\pi}{2\sqrt{R_{\max}}},g))\geq \frac{C_0}{R_{\max}}.
\end{equation}
\end{lemma}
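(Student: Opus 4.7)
The plan is to view $V_p(d,g)$ as a ``transverse tubular neighborhood'' of the Reeb orbit closure $\Sigma_p$ through $p$ and to estimate its volume via a suitable form of the Weyl tube formula together with G\"unther's transverse area comparison. A preliminary observation: since $\xi$ is a unit Killing field for $g^M$ and the transverse distance $d_g$ vanishes along Reeb orbits (and, by continuity and density of the orbit in its closure, along their closures), $V_p(d,g)$ is invariant under the closure of the Reeb flow and coincides with the set of points at transverse distance at most $d$ from $\Sigma_p$. In the quasi-regular case, as well as at the polar orbits in the irregular case, $\Sigma_p$ is a closed Reeb geodesic $\gamma_p$; for a generic $p$ in the irregular case, $\Sigma_p$ is a $2$-torus $T_p$, namely the $T^2$-orbit of $p$.

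Suppose first that $\Sigma_p=\gamma_p$ is a closed Reeb orbit of length $\ell_p$. The decomposition $g^M=\eta\otimes\eta+g^T$ makes $\xi$ of unit $g^M$-norm perpendicular to $\mathcal{D}$, and the Reeb flow preserves $\mathcal{D}$ isometrically, so $V_p$ is swept out by the Reeb flow from a transverse geodesic disk $D\subset\mathcal{D}_p$ of $g^T$-radius $d$. Using $dV_{g^M}=\eta\wedge d\eta$ and Fubini,
\[
\text{Vol}_{g^M}(V_p)=\ell_p\cdot\text{Area}_{g^T}(D).
\]
The transverse Gauss curvature equals $R/2\leq R_{\max}/2$, and the choice $d=\pi/(2\sqrt{R_{\max}})$ is strictly less than the transverse conjugate radius $\pi/\sqrt{R_{\max}/2}$, so G\"unther's area comparison yields $\text{Area}_{g^T}(D)\geq C_1/R_{\max}$ with $C_1$ absolute. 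The length $\ell_p$ equals the period of $\xi_a$ at $p$, which depends only on the Reeb field (not on the deformation), and is uniformly bounded below by a constant $\ell_{\min}(\xi_a)>0$. This gives $\text{Vol}_{g^M}(V_p)\geq C_0/R_{\max}$ in this case.

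For the remaining case -- an irregular Sasakian structure with $\Sigma_p=T_p$ a $2$-torus -- the key new input is that on an irregular $(S^3,\xi_a)$ every continuous basic function is automatically $T^2$-invariant (being $\xi$-invariant and continuous, it must be constant on closures of Reeb orbits, which are the generic 2-tori). Hence every second-type deformation $\tilde\eta=\eta+d^c_B\varphi$ preserves the ambient $T^2$-symmetry. Under this symmetry the tori $T_s$ are parametrized by the transverse arc length $s\in[0,L]$ from the polar circle $\Gamma_0=\{z_1=0\}$, the area $A(s)=\text{Area}_{g^M}(T_s)$ is a function of $s$ alone, and
\[
\text{Vol}_{g^M}(V_p)=\int_{\max(0,s_p-d)}^{\min(L,s_p+d)}A(s)\,ds.
\]
If $s_p\leq d$ or $s_p\geq L-d$, the integration interval reaches a polar endpoint and $V_p$ engulfs a transverse tube around a polar Reeb circle of length $2\pi/a_j$, so the previous paragraph applies. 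Otherwise $s_p\in[d,L-d]$, and we apply the Weyl tube formula to the hypersurface $T_p$: its leading contribution is $2d\cdot A(T_p)$, with higher-order corrections controlled by the transverse curvature bound and simplified by the Gauss-Bonnet relation $\int_{T_p}K_{\text{int}}\,dA=0$ together with the Sasakian identity $\sec(TT_p)=1$; these corrections stay comparable to the leading term for our choice of $d$.

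The main obstacle is precisely this ``middle'' subcase, in which one needs to bound $A(T_p)$ uniformly from below by $\sim 1/\sqrt{R_{\max}}$. The area $A(s)$ vanishes at the two polar endpoints and could a priori be concentrated, so a quantitative pointwise lower bound requires exploiting the constraints available in the $T^2$-symmetric setting: the invariance of the total volume $\int_0^L A(s)\,ds=\tfrac{1}{2}\int\eta\wedge d\eta$ under second-type deformations (a basic cohomology class quantity); the fixed transverse Ricci-form integral $\int K^T\,dV=\kappa\cdot\text{Vol}_{g^M}(S^3)$ dictated by $c_B^1=\kappa[d\eta]_B$; and the Jacobi-type ODE satisfied by $\sqrt{A(s)}$ along the transverse geodesic, whose coefficient is bounded in terms of $R_{\max}$. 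Combining these relations to extract the desired lower bound on $A(T_p)$ in the middle regime is the technical heart of the lemma.
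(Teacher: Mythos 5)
Your overall architecture --- tubes about the Reeb-orbit closure, a closed-orbit case versus a torus case, and a near-polar/middle dichotomy --- matches the paper's, but two essential steps are missing. First, in the closed-orbit case the identity $\mathrm{Vol}_{g^M}(V_p)=\ell_p\cdot\mathrm{Area}_{g^T}(D)$ presupposes that the normal exponential map of the orbit is a diffeomorphism out to transverse radius $d=\pi/(2\sqrt{R_{\max}})$. That is precisely what fails in the quasi-regular case: a generic orbit of length $2\pi$ lying close to the singular orbit $l_{z_2}$ winds $a_2$ times around it, so the transverse disk swept along the orbit overlaps itself and $\mathrm{minfoc}(l)$ can be as small as order $1/(a_2\sqrt{R_{\max}})$. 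Without an injectivity-radius input the Fubini/G\"unther count overstates the volume. The paper has to import Wu's injectivity radius estimates for the orbifold base \cite{Wu} to get $\mathrm{minfoc}(l_{z_i})\geq \pi/\sqrt{R_{\max}}$ at the singular orbits and $\mathrm{minfoc}(l)\geq C/(a_2\sqrt{R_{\max}})$ for orbits away from them (whence the bound $C/(a_2^2 R_{\max})$, not $C/(a_2 R_{\max})$), and treats orbits very near a singular one via the inclusion $T(l_{z_2},d/2)\subset T(l,d)$. G\"unther's comparison alone cannot substitute for this.

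Second, in the irregular case you correctly isolate the middle-torus regime as the crux and then stop: none of the global constraints you list (fixed total volume, fixed $\int K^T$, a Jacobi-type ODE with coefficient bounded by $R_{\max}$) yields a pointwise lower bound on $A(T_p)$, since a priori the area could concentrate near one end. The missing idea is Lemma \ref{arealowerbound}: writing $N=\Phi X$ for the unit normal of the tori and using the Riccati equation together with the Sasakian identities $\nabla^M_{\xi_a}\xi_a=0$ and $\nabla^M_X\xi_a=\Phi X$ (which give $|H|^2-|A|^2=-2$ and kill the cross terms in $(\mathrm{tr}\,S)'$), one finds $\Theta_N''=-K(X,N)\Theta_N\leq 0$, i.e., the area $A(s)$ of the parallel tori is \emph{concave} in the transverse arc-length parameter. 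Concavity, combined with the Weyl tube-formula lower bound for $A$ at distance $\pi/(8\sqrt{R_{\max}})$ from the two polar circles, forces $\mathrm{Area}(T_{c_1})\geq \pi^2/(a_2\sqrt{R_{\max}})$ on the entire middle range, and integrating over an interval of length $\pi/(4\sqrt{R_{\max}})$ closes the argument. As written, your proposal leaves exactly this step (and the injectivity-radius step above) open, so it does not yet prove Lemma \ref{volest}.
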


\begin{proof}
We will prove this Lemma in next section by using Weyl's tube formula.
We treat quasi-regular and irregular Sasakian 3-spheres separately.
For quasi-regular Sasakian 3-spheres, we use essentially Wu's injectivity radius estimate \cite{Wu}, see Proposition \ref{prop1}.
For the case of irregular Sasakian 3-spheres, see Proposition \ref{prop2}.
\end{proof}

\begin{theorem}
Along the flow (\ref{SR}) with $R(0)>0$, there exist constants $c_1>0$ and $c_2<\infty$ such that
$$c_1\leq R(t)\leq c_2.$$
\end{theorem}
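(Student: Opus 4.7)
I would follow Hamilton's scheme for the Ricci flow on $S^2$, combining the entropy bound with the Harnack inequality (\ref{Harnack}) and the volume estimate of Lemma~\ref{volest}. The upper bound comes from the entropy, and the lower bound follows after first deducing a uniform diameter estimate from Lemma~\ref{volest}.

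For the upper bound, set $E(t):=\int_M R\log R\,d\mu$, which is non-increasing by the entropy theorem, so $E(t)\leq E(0)$. Suppose $M_t:=R_{\max}(t)$ is very large and attained at $x$. Apply (\ref{Harnack}) with $\tau=t$ and $T=t+M_t^{-1}$, joining $x$ to any point $y$ in the transverse ball $V_x(\pi/(2\sqrt{M_t}),g_t)$ by a constant-speed curve. On this short time interval the bound $|\partial_s g|=|r-R|\,g\leq CM_t\,g$ keeps the metrics $g_s$ comparable to $g_t$, so $D\leq C_1$ independently of $M_t$, and the Harnack inequality yields $R(y,T)\geq cM_t$ on the whole ball for some universal $c>0$. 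Lemma~\ref{volest} provides volume $\geq C_0'/M_t$ for this set at time $T$, so
\[
E(0)\;\geq\;E(T)\;\geq\;cM_t\log(cM_t)\cdot \frac{C_0'}{M_t}\;\geq\;c'\log M_t-c'',
\]
which forces $M_t\leq c_2$ for a universal constant $c_2$.

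For the lower bound, once $R\leq c_2$ is uniform, Lemma~\ref{volest} provides a uniform lower bound $v_0>0$ on the volume of every transverse ball of radius $r_0:=\pi/(2\sqrt{c_2})$. Since $V_0:=\int_M d\eta\wedge\eta$ is conserved by Proposition~\ref{fundamentalevolution}, a packing argument bounds the transverse diameter by some constant $C_2$. Conservation of $\int_M R\,d\mu=rV_0$ guarantees, at every time $t$, the existence of a point $x_t$ with $R(x_t,t)\geq r$. Applying (\ref{Harnack}) from $(x_t,t)$ to an arbitrary $(y,t+1)$ along a constant-speed curve of transverse length $\leq C_2$, with metric variation controlled by $|R-r|\leq c_2+r$, yields $D\leq C_3$, hence
\[
R(y,t+1)\;\geq\;\frac{e^{rt}-1}{e^{r(t+1)}-1}\,e^{-C_3/4}\,r\;\geq\;c_1>0
\]
for $t\geq 1$. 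The residual interval $t\in[0,2]$ is handled by smoothness of the flow and the hypothesis $R(0)>0$.

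\textbf{Main obstacle.} The delicate step is showing that $D$ stays bounded independently of $M_t$ in the upper-bound argument: the transverse radius is $O(M_t^{-1/2})$ and the time span is $O(M_t^{-1})$, so the naive kinetic energy $d^2/(T-\tau)$ is $O(1)$, but one must simultaneously control the metric distortion, which is driven by $R$ and can be as large as $O(M_t)$. Since the distortion over time $O(M_t^{-1})$ with $|\partial_t g|=O(M_t)\,g$ nevertheless stays bounded by a universal factor, the estimate goes through, but making this rigorous requires a short-time a~priori control of $R$ on $[t,t+M_t^{-1}]$, for example via the ODE comparison $\dot u= u(u-r)$ for $R_{\max}$ coming from (\ref{evolveR}). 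Once this comparability is in hand, the rest of the argument is the direct assembly above.
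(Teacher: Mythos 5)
Your proposal is correct and follows essentially the same route as the paper's proof: an ODE comparison for $R_{\max}$ to keep the metrics comparable over a short time interval, the Harnack inequality to spread the large curvature over a transverse ball of radius $\pi/(2\sqrt{R_{\max}})$, Lemma \ref{volest} together with the entropy monotonicity for the upper bound, and then volume conservation, a transverse diameter bound, and Harnack again for the lower bound. The only quibble is quantitative: with $T=t+M_t^{-1}$ the comparison $\dot u\leq u^2$ blows up exactly at time $M_t^{-1}$ and gives no control, so one should take $T=t+\tfrac{1}{2}M_t^{-1}$ (as the paper does) to get $R_{\max}(T)\leq 2R_{\max}(t)$.
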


\begin{proof}
Without loss of generality, let
$$\tau\geq 1, \quad T= \tau+\frac{1}{2R_{\max}(\tau)}.$$
It follows from (\ref{evolveR}) that $R_{\max}(T)\leq 2R_{\max}(\tau)$.
Then by the definition of the flow (\ref{SR}), $g_t, t\in [\tau,T]$, are equivalent, i.e.
there exist uniform positive constants $\delta_1, \delta_2$ such that
$$\delta_1g(T)\leq g(t)\leq \delta_2 g(T).$$
Let $\gamma(t)$ be the shortest curve ,joining $x$ and $y$, with respect to $g(T)$.
By the definition (\ref{D}) of $D$, we have
\begin{equation}\label{Destimate1}
D((x,\tau),(y,T))\leq \delta_2\int_\tau^T|\dot{\gamma}|^2_{g(T)}dt=\delta_2\frac{d_{g(T)}(x,y)^2}{T-\tau}.
\end{equation}
Taking $x=y=p$ be a point where $R_{\max}(\tau)$ is achieved, it follow from the Harnack inequality (\ref{Harnack}) that
\begin{equation}\label{Restimate1}
R_{\max}(T)\geq \frac{e^{r\tau}-1}{e^{rT}-1}R_{\max}(\tau).
\end{equation}
We shall apply the Lemma \ref{volest} with
$$g=g(T), \quad d=\frac{\pi}{2\sqrt{R_{\max}(T)}}.$$

It follows from (\ref{Destimate1}), (\ref{Restimate1}) that there exists a positive constant $C(r)$ such that
$$D((p,\tau),(q,T))\leq C(r)\delta_2, \quad \text{for} \quad q\in V_p(\frac{\pi}{2\sqrt{R_{\max}(T)}},g(T)).$$
By the Harnack inequality (\ref{Harnack}) again, there exists a positive constant $C(r,\delta_2)$ such that
$$R(q,T)\geq C(r, \delta_2) R_{\max}(\tau), \quad \text{for} \quad q\in V_{p}(\frac{\pi}{2\sqrt{R_{\max}(T)}},g(T)).$$
Note that $$R\log R\geq -\frac{1}{e}.$$
Integrating over $S^3$ at time $T$, we get
\begin{eqnarray*}\int_{S^3}(R\log R+\frac{1}{e})d\mu_{g^M(T)}&\geq& \int_{V_p(\frac{\pi}{2\sqrt{R_{\max}(T)}},g(T))}R\log Rd\mu_{g^M(T)}
\\&\geq& C(r, \delta_2)R_{\max}(\tau)\log (C(r,\delta_2)R_{\max}(\tau))\frac{C_0}{R_{\max}(T)}
\\&\geq& \frac{1}{2}C_0C(r, \delta_2)\log (C(r,\delta_2)R_{\max}(\tau))
.\end{eqnarray*}
Now by the entropy formula, we see that there exists a positive constant $C_1$ such that
$$R_{\max}(\tau)\leq C_1.$$

One can use the upper bound of $R$ to conclude a lower bound of $R$.
Since the volume is constant and $R$ has a upper bound, by Lemma
\ref{volest} we see that the transverse diameter must be bounded from above.
Otherwise, we would have two much volume.
Let $x_0$ be a point with $R(x_0,\tau)\geq r$.
For $\tau\geq 1$, $g_t, t\in [\tau,\tau+1]$, are equivalent.
Hence it follows from the upper bound of the transverse diameter that, there exists a positive constant $C$ such that
$$D((\tau,x_0), (\tau+1,x))\leq C, \quad \forall x.$$
It follows from the Harnack inequality that $R(\tau+1)$ has a uniform lower bound.
\end{proof}

We now show that the flow (\ref{SR}) converges to a gradient
Sasaki-Ricci soliton. For a Sasakian 3-sphere $(S^3,\eta)$, let $f$ be the basic
function defined by $$\triangle_B f=R-r,$$ and
$$M_{ij}=\nabla_i\nabla_jf-\frac{1}{2}\triangle_B fg_{ij}.$$
Then $(S^3,\eta)$ is a gradient Sasaki-Ricci soliton if and only if
$$M_{ij}=0.$$
We will show that the Sasaki-Ricci flow (\ref{SR}) converges to a
solution satisfying $M_{ij}=0$, which means the Sasaki-Ricci soliton
solution is generated by diffeomorphisms related to $X=-\frac{1}{2}\nabla f$.

\begin{theorem}
Along the Sasaki-Ricci flow (\ref{SR}) with $R(0)>0$, we have
$$|M_{ij}|^2\leq Ce^{-ct}.$$
Hence $M_{ij}$ converges to zero exponentially.
\end{theorem}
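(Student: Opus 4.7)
The strategy is to adapt Hamilton's argument for the Ricci flow on $S^2$ to the transverse K\"ahler geometry of the Sasakian 3-sphere, closing with a maximum-principle argument driven by the uniform lower bound $R\geq c_1>0$ from the previous theorem.

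First, I would fix the time-dependence of the potential function $f$ along the flow. Since at each time the basic function $f$ satisfying $\triangle_B f=R-r$ is determined only up to an additive constant, one has the freedom to prescribe its evolution. Choose
$$\frac{\partial f}{\partial t}=\triangle_B f+rf,$$
and verify using (\ref{evolveR}) that this is consistent with the condition $\triangle_B f=R-r$ being preserved along (\ref{SR}).

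Second, I would derive an evolution equation for $M_{ij}$ and hence for $|M|^2$. Because $f$ is basic, all quantities descend to the transverse K\"ahler structure and Reeb-direction contributions drop out. Working in transverse holomorphic coordinates, the argument parallels Hamilton's two-dimensional computation, making essential use of the identity $R_{ij}=\frac{1}{2}Rg_{ij}$ available in transverse real dimension two. The anticipated identity is
$$\left(\frac{\partial}{\partial t}-\triangle_B\right)|M|^2=-2|\nabla M|^2-2R|M|^2,$$
consistent with the explicit computation of \cite{Ye}. Invoking the uniform lower bound $R\geq c_1>0$ from the previous theorem gives
$$\left(\frac{\partial}{\partial t}-\triangle_B\right)|M|^2\leq -2c_1|M|^2.$$
Applying the parabolic maximum principle to $e^{2c_1 t}|M|^2$ then yields
$$\max_{S^3}|M|^2(t)\leq \|M(0)\|_\infty^2\,e^{-2c_1 t},$$
which is the claimed exponential decay with constants $C=\|M(0)\|_\infty^2$ and $c=2c_1$.

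The main obstacle is the second step: handling the commutators for the transverse Levi-Civita connection, verifying that all terms involving the Reeb direction vanish on the basic tensors at hand, and simplifying the curvature contribution to the clean form $-2R|M|^2$. A useful sanity check is that integrating the expected identity for $|M|^2$ against $d\mu$ is consistent with the entropy formula proved earlier, which already forces $\int_0^T\!\int_{S^3}|M|^2\,d\mu\,dt$ to remain uniformly bounded in $T$ by virtue of the two-sided bounds on $R$; the new input here is that the pointwise evolution inequality, rather than only its integrated form, is what upgrades this integrability into exponential decay.
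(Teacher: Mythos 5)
Your proposal matches the paper's argument: the paper states exactly the evolution identity $\partial_t|M_{ij}|^2=\triangle_B|M_{ij}|^2-2|\nabla_kM_{ij}|^2-2R|M_{ij}|^2$ and treats the decay as a direct consequence, with the uniform lower bound $R\geq c_1>0$ and the maximum principle left implicit. Your normalization $\partial_tf=\triangle_Bf+rf$ and the resulting estimate $|M|^2\leq\|M(0)\|_\infty^2e^{-2c_1t}$ simply make explicit the steps the paper omits, so this is essentially the same proof.
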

\begin{proof} It is a direct consequence of the evolution equation of $|M_{ij}|^2$:
$$\frac{\partial}{\partial t}|M_{ij}|^2=\triangle_B |M_{ij}|^2-2|\nabla_kM_{ij}|^2-2R|M_{ij}|^2.$$
\end{proof}

So we have proved the following
\begin{theorem}\label{Main} The Sasaki-Ricci flow (\ref{SR}) on a Sasakian 3-sphere with $R(0)>0$ converges exponentially
to a gradient Sasaki-Ricci soliton.
\end{theorem}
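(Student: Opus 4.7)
The plan is to promote the exponential decay $|M_{ij}|^2 \le Ce^{-ct}$ of the preceding theorem into exponential convergence of the Sasakian structure itself, following Hamilton's strategy for the normalized Ricci flow on surfaces.

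First I would bootstrap the decay of $M_{ij}$ to all orders of derivative. Combining the uniform two-sided bound $c_1 \le R \le c_2$ with Shi-type derivative estimates adapted to the Sasaki-Ricci flow yields uniform $C^k$ bounds on the transverse metric and on $R$. Elliptic regularity applied to $\triangle_B f = R - r$ (normalized by $\int_{S^3} f\, d\mu = 0$) then gives uniform $C^{k+2}$ bounds on the potential $f$, hence on $M_{ij}$. Interpolating the exponentially decaying $L^\infty$ norm of $M_{ij}$ against these uniform higher bounds yields decay $\|M_{ij}\|_{C^k} \le C_k e^{-c_k t}$ for every $k$.

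Second, I would kill the diffeomorphism gauge. Let $X(t) = -\tfrac{1}{2}\nabla f(t)$, extended by $\xi(f)=0$ so that it commutes with the Reeb field, and let $\Psi_t$ denote the one-parameter family of diffeomorphisms of $S^3$ that it generates. A direct computation gives
\[
\frac{\partial}{\partial t}\tilde g_{ij} = \Psi_t^*\bigl((r-R)g_{ij} + ({\cal L}_X g)_{ij}\bigr),
\]
and the bracketed expression is a constant multiple of $M_{ij}$; hence the pullback transverse metric $\tilde g(t) = \Psi_t^* g(t)$ evolves by a tensor of size $O(e^{-c_k t})$ in every $C^k$ norm. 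Time-integration then gives a Cauchy criterion in $C^\infty$, so $\tilde g(t)$ converges exponentially to a limit transverse metric $g_\infty$, and $\Psi_t$ converges to a limit diffeomorphism $\Psi_\infty$.

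The limit satisfies $M_{ij}^\infty = 0$ and is therefore a gradient Sasaki-Ricci soliton in the homologous class of the initial Sasakian structure. Undoing the pullback gives the desired exponential convergence of the original flow. The main obstacle I expect is verifying gauge-compatibility: that $X(t)$ can indeed be chosen to be transversely holomorphic and Hamiltonian with respect to $d\eta(t)$ while commuting with $\xi$, so that $\Psi_t^*\eta(t)$ stays within the basic deformation class $\tilde\eta = \eta + d^c_B\varphi$ of Section 2. Once this gauge-compatibility is settled, the remaining argument reduces to integrating exponentially small quantities in time.
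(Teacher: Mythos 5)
Your proposal is correct and follows essentially the same route as the paper: the theorem is deduced from the uniform two-sided bound on $R$ together with the exponential decay of $|M_{ij}|^2$, exactly as in Hamilton's argument for the Ricci flow on surfaces. The paper in fact gives no further detail at this final step, so your bootstrapping of the decay to higher derivatives and the gauge-fixing by the diffeomorphisms generated by $X=-\tfrac{1}{2}\nabla f$ (with the understanding that convergence of the original flow is convergence modulo this soliton vector field) are precisely the standard details the paper leaves implicit.
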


\section{Volume estimates}

In this section, we show Lemma \ref{volest}. We use a Weyl type tube formula to prove the volume estimate (\ref{volestformula}).
We treat quasi-regular Sasakian 3-spheres and irregular Sasakian structures separately. We first recall Weyl's tube formula.

Let $P^q$ be a $q$-dimensional embedded closed submanifold in $M^n$.
A tube $T(P,r)$ of radius $r\geq 0$ about $P$ is the set
\begin{eqnarray*}
T(P,r)=\{x\in M| \quad dist_{g^M}(x,P)\leq r\}.
\end{eqnarray*}
A hypersurface of the form
$$P_t=\{x\in T(P,r)| \quad dist_{g^M}(x,P)=t\}$$
is called the tubular hypersurface at a distance $t$ from $P$.
Let $A_P^M(t)$ denote the $(n-1)$-dimensional area of $P_r$, and $V_P^M(r)$ denote the $n$-dimensional volume of $T(P,r)$.

Let $\nu$ denote the normal bundle of $P$, and $\exp_\nu$ be the exponential map. Then we define
$\text{minfoc}(P)$ to be the supremum of $r$ such that
\begin{eqnarray*}\label{minfoc}
\exp_\nu: \{(p,v)\in\nu, |v|\leq r \}\rightarrow T(P,r)
\end{eqnarray*}
is a diffeomorphism.
For the exponential map $\exp_\nu$, we would like to note the following well-known fact:
any geodesic $\gamma(t)$ in a Sasakian $(S^3,g^M)$
with $\dot{\gamma}(0)\in \mathcal{D}_{\gamma(0)}$ must be horizontal.

\begin{proposition} \label{radialhor} Let $\gamma(t)$ be a geodesic in $(S^3,g^M)$ with
$$\gamma(0)=p, \quad \dot{\gamma}(0)\in \mathcal{D}_p.$$
Then we have
$$\dot{\gamma}\in \mathcal{D}_{\gamma(t)}.$$
\end{proposition}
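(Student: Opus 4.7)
The plan is to show that the function $h(t) := g^M(\dot\gamma(t),\xi_{\gamma(t)}) = \eta(\dot\gamma(t))$ is constant along the geodesic, so if it vanishes initially it vanishes for all $t$.

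First I would differentiate $h$ along $\gamma$. Since $\gamma$ is a geodesic, $\nabla^M_{\dot\gamma}\dot\gamma = 0$, and metric compatibility gives
\begin{equation*}
h'(t) = g^M(\dot\gamma,\nabla^M_{\dot\gamma}\xi).
\end{equation*}
The key ingredient is now the characterization of Sasakian structures recalled in Section~2: $\xi$ is a Killing vector field on $(M,g^M)$. The Killing equation
\begin{equation*}
g^M(\nabla^M_Y\xi,Z)+g^M(\nabla^M_Z\xi,Y)=0
\end{equation*}
specializes, upon setting $Y=Z=\dot\gamma$, to $g^M(\nabla^M_{\dot\gamma}\xi,\dot\gamma)=0$. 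Hence $h'(t)\equiv 0$.

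Combining $h(0)=\eta_p(\dot\gamma(0))=0$ (the hypothesis $\dot\gamma(0)\in\mathcal{D}_p=\ker\eta_p$) with $h'\equiv 0$ yields $\eta_{\gamma(t)}(\dot\gamma(t))=0$ for all $t$, which is exactly the statement $\dot\gamma(t)\in\mathcal{D}_{\gamma(t)}$.

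There is essentially no obstacle here; the only nontrivial input is the Killing property of the Reeb field, which is built into the definition of a Sasakian manifold and was already highlighted in Section~2. The usefulness of the statement for later sections (tube formula, horizontal geodesics, volume estimates) justifies recording it as a separate proposition even though the argument is a one-line application of the Killing identity.
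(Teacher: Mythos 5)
Your proof is correct and follows essentially the same route as the paper: both show that $g^M(\dot\gamma,\xi)$ is constant along the geodesic by differentiating and using $\nabla^M_{\dot\gamma}\dot\gamma=0$. The paper kills the derivative via the explicit Sasakian identity $\nabla^M_{\dot\gamma}\xi=\Phi H$ together with skew-symmetry of $\Phi$, whereas you invoke the Killing equation directly; these are equivalent, since the Killing property of $\xi$ is exactly the antisymmetry of $\nabla^M\xi$ that the paper's computation exploits.
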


\begin{proof}
Let $p$ be a given point in $S^3$ and $\gamma(t)$ be a geodesic of arc-lengthly parameterized through $p$ in $(S^3,g^M)$. Set
$$\dot{\gamma}=H+V,$$
where $H$ is the horizontal part and V is the vertical part. Then we have
$$\frac{d}{dt}g^M(\dot{\gamma},\xi)=g^M(\dot{\gamma},\nabla^M_{\dot{\gamma}}\xi)=g^M(\dot{\gamma},\Phi H)=g^M(H,\Phi H)=0.$$
\end{proof}

Let $S^{n-q-1}(\nu_p)$ denote the unit sphere in $\nu_p$ and $d\mu_\nu$ be the standard volume $n$-form on the normal bundle $\nu$.
One can introduce the following function on the normal bundle
$$\Theta_u(p,t)=\frac{\exp_\nu^*[d\mu_M(\exp_\nu(p,tu))]}{d\mu_\nu(p,tu)}$$
for $$0\leq t<\text{minfoc}(P), \quad u\in S^{n-q-1}(\nu_p).$$
For $0< t,r <\text{minfoc}(P)$, we have
\begin{equation}\label{APM}
A_P^M(t)=t^{n-q-1}\int_P\int_{S^{n-q-1}(\nu_p)}\Theta_u(p,t)du dp,
\end{equation}
and
\begin{equation}\label{VPM}
V_P^M(r)=\int_0^rA_P^M(t) dt.
\end{equation}

Let $\sigma(x)$ be the distance function from $P$ to $x$ and
$$N=\nabla^M\sigma$$
be the formal outward unit normal vector field of $P_{\sigma(x)}$. On the tubular hypersurfaces $P_t$, let
$$Su=\nabla^M_uN,$$
here $u\in TP_t$, be the shape operator. The shape operator $S$ satisfies the Riccati differential equation
\begin{equation}\label{Riccati} S'(t)=-S(t)^2+R^M_N,
\end{equation}
here $$S'=\nabla^M_NS, \quad R^M_Nu=R^M(N,u)N, \quad u\in TP_t.$$
We also have
\begin{equation}\label{changeofareaelement}\Theta_u'(t)=(\tr S(t)-\frac{n-q-1}{t})\Theta_u(t),
\end{equation}
here we omit the parameter $p\in P$. Now we can introduce a Weyl type tube formula which we will use in this section,
see for instance \cite{Gray}.

\begin{theorem} Suppose that $P\subset M^n$ is a $q$-dimensional closed submanifold and the sectional curvature of $M$ satisfies
$$K^M\leq \lambda.$$
Then for $0<r<\text{minfoc}(P)$, we have
\begin{eqnarray}\label{Weyl}
A_P^M(r)\geq \frac{2\pi^{\frac{n-q}{2}}}{\Gamma(\frac{n-q}{2})}
&&\sum_{c=0}^{[\frac{q}{2}]}\frac{k_{2c}(R^P-R^M)}{(n-q)(n-q+2)\cdot\cdot\cdot(n-q+2c-2)}
\\&&\cdot(\cos(r\sqrt{\lambda}))^{q-2c}(\frac{\sin(r\sqrt{\lambda})}{\sqrt{\lambda}})^{n-q+2c-1},\nonumber
\end{eqnarray}
here $$k_0(R^P-R^M)=\int_Pd\mu_P.$$
\end{theorem}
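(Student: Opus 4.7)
The plan is to reduce the claim to a pointwise lower bound on the volume density $\Theta_u(p,t)$, which by (\ref{APM}) controls $A_P^M(r)$. Since $\Theta_u(p,t)$ is the Jacobian of the normal exponential map $\exp_\nu$, it can be written as a Gram determinant of $n-1$ Jacobi fields along the normal geodesic $\gamma_u(t)=\exp_p(tu)$. Differentiating $\exp_\nu$ in $q$ directions tangent to $P$ at $p$ and in $n-q-1$ directions of $\nu_p$ perpendicular to $u$ produces a natural such basis: ``$P$-Jacobi fields'' $J_1,\dots,J_q$ with $J_i(0)\in T_pP$ and $J_i'(0)=-A_u J_i(0)$, where $A_u$ is the Weingarten operator of $P$ in direction $u$, together with ``point Jacobi fields'' $J_{q+1},\dots,J_{n-1}$ satisfying $J_j(0)=0$ and $J_j'(0)\in \nu_p$, $J_j'(0)\perp u$.

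Next, I would invoke Rauch/Riccati comparison under $K^M\le\lambda$, so that each Jacobi field norm is bounded below by the analogous norm in the constant sectional curvature $\lambda$ model. Diagonalizing $A_u$ with eigenvalues $\mu_1(u),\dots,\mu_q(u)$ in an orthonormal frame parallel along $\gamma_u$ gives, for $0<t<\mathrm{minfoc}(P)$,
$$|J_i(t)|\ge \cos(t\sqrt{\lambda})-\mu_i(u)\,\frac{\sin(t\sqrt{\lambda})}{\sqrt{\lambda}},\quad i=1,\dots,q,\qquad |J_j(t)|\ge \frac{\sin(t\sqrt{\lambda})}{\sqrt{\lambda}},\quad j=q+1,\dots,n-1.$$
Taking the product of these lower bounds and combining with the $t^{n-q-1}$ factor in (\ref{APM}) then yields
$$A_P^M(r)\ge \left(\frac{\sin(r\sqrt{\lambda})}{\sqrt{\lambda}}\right)^{n-q-1}\int_P\int_{S^{n-q-1}(\nu_p)}\prod_{i=1}^q\left(\cos(r\sqrt{\lambda})-\mu_i(u)\,\frac{\sin(r\sqrt{\lambda})}{\sqrt{\lambda}}\right)du\,dp.$$

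Finally, I would expand the product over $i$ as $\sum_{k=0}^q(-1)^k(\cos r\sqrt{\lambda})^{q-k}(\sin(r\sqrt{\lambda})/\sqrt{\lambda})^k\,\sigma_k(\mu_1,\dots,\mu_q)$ and integrate. Since $A_{-u}=-A_u$, the antipodal symmetry of $S^{n-q-1}(\nu_p)$ annihilates all odd-$k$ terms, leaving only $k=2c$. For each even $2c$, the spherical moment of $\sigma_{2c}(\mu_1(u),\dots,\mu_q(u))$ is, after using Gauss' equation to rewrite quadratics in the shape operator as $R^P-R^M|_{TP}$, exactly the Gray invariant $k_{2c}(R^P-R^M)$. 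The spherical integration supplies the normalization $1/\bigl((n-q)(n-q+2)\cdots(n-q+2c-2)\bigr)$, and the total sphere volume contributes $2\pi^{(n-q)/2}/\Gamma((n-q)/2)=\mathrm{Vol}(S^{n-q-1})$; assembling the surviving terms delivers (\ref{Weyl}).

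The hard step is the last one: recognizing the spherical moments of the elementary symmetric polynomials $\sigma_{2c}$ in the shape-operator eigenvalues as the Gray invariants $k_{2c}(R^P-R^M)$ and matching all combinatorial normalizations exactly. This algebraic bookkeeping is the substance of Gray's derivation of the classical Weyl tube formula; the one-sided Rauch comparison used here simply replaces the equalities of the constant-curvature case by the inequalities needed here, so no additional algebraic difficulty arises.
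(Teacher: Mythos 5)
The paper does not prove this theorem at all: it is quoted from Gray's book \cite{Gray} as a known comparison version of Weyl's tube formula, so there is no internal proof to match yours against. Your overall strategy --- bound the density $\Theta_u(p,t)$ from below by its constant-curvature-$\lambda$ value, integrate via (\ref{APM})--(\ref{VPM}), expand $\det\bigl(\cos(r\sqrt{\lambda})I-\frac{\sin(r\sqrt{\lambda})}{\sqrt{\lambda}}A_u\bigr)$ into elementary symmetric functions of the shape operator, kill the odd terms by the antipodal symmetry $A_{-u}=-A_u$, and identify the spherical moments of $\sigma_{2c}$ with the invariants $k_{2c}(R^P-R^M)$ via the Gauss equation --- is exactly Gray's, and the final combinatorial identification you defer to is indeed where the bookkeeping lives.

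There is, however, one genuine logical gap in the middle step. The density $\Theta_u(p,t)$ is the \emph{determinant} of the matrix of Jacobi fields $(J_1,\dots,J_{n-1})$, not the product of their norms, and Hadamard's inequality gives $\det(J_1,\dots,J_{n-1})\leq \prod_i|J_i|$ --- the wrong direction. So the Rauch-type lower bounds $|J_i(t)|\geq \cos(t\sqrt{\lambda})-\mu_i(u)\frac{\sin(t\sqrt{\lambda})}{\sqrt{\lambda}}$ and $|J_j(t)|\geq \frac{\sin(t\sqrt{\lambda})}{\sqrt{\lambda}}$, while individually correct for $t<\text{minfoc}(P)$, cannot simply be multiplied to bound $\Theta_u$ from below: you would be chaining $\prod(\text{model}_i)\leq\prod|J_i|$ with $\det\leq\prod|J_i|$, which proves nothing. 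The correct route is the one the paper itself sets up in equations (\ref{Riccati}) and (\ref{changeofareaelement}): since $K^M\leq\lambda$ gives $R^M_N\leq\lambda\,\mathrm{Id}$ on $N^{\perp}$, the matrix Riccati comparison theorem yields $S(t)\geq S_{\lambda}(t)$ for the shape operators of the tubular hypersurfaces (with the same initial data $-A_u$ on $T_pP$ and the $1/t$ blow-up on the normal directions), hence $\mathrm{tr}\,S(t)\geq\mathrm{tr}\,S_{\lambda}(t)$, and integrating (\ref{changeofareaelement}) gives
\begin{equation*}
\Theta_u(p,t)\ \geq\ \det\Bigl(\cos(t\sqrt{\lambda})I_q-\tfrac{\sin(t\sqrt{\lambda})}{\sqrt{\lambda}}A_u\Bigr)\Bigl(\tfrac{\sin(t\sqrt{\lambda})}{t\sqrt{\lambda}}\Bigr)^{n-q-1}.
\end{equation*}
From this point your displayed integral inequality and the subsequent expansion are correct, so the fix is local; but as written the proposal's key inequality is unjustified, and the identification of the spherical moments with $k_{2c}(R^P-R^M)$ is asserted rather than carried out.
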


\begin{corollary}
Let $P$ be a closed embedding curve $P$ in $(S^3, g^M)$ and
$$K^{M}\leq \lambda.$$
Then for $0<r<\text{minfoc}(P)$, we have
\begin{equation}\label{Weyl2}
A_P^M(r)\geq \pi L(P)\frac{\sin(2r\sqrt{\lambda})}{\sqrt{\lambda}}.
\end{equation}
\end{corollary}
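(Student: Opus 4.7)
The plan is to specialize the Weyl tube formula \eqref{Weyl} just stated, applied with $n=3$ (the ambient Sasakian three-sphere) and $q=1$ (an embedded curve $P$). In this situation $n-q=2$ and $[q/2]=0$, so the sum over $c$ on the right-hand side collapses to the single term $c=0$. The whole proof is then a direct unwinding of the right-hand side in this special case; no auxiliary result is needed.

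First I would evaluate each factor appearing in the $c=0$ term. The prefactor becomes
$$\frac{2\pi^{(n-q)/2}}{\Gamma((n-q)/2)} = \frac{2\pi}{\Gamma(1)} = 2\pi.$$
Next, the zeroth curvature invariant equals the length,
$$k_{0}(R^{P}-R^{M}) = \int_{P} d\mu_{P} = L(P).$$
The trigonometric factor $(\cos(r\sqrt{\lambda}))^{q-2c}$ reduces to $\cos(r\sqrt{\lambda})$, and $(\sin(r\sqrt{\lambda})/\sqrt{\lambda})^{n-q+2c-1}$ reduces to $\sin(r\sqrt{\lambda})/\sqrt{\lambda}$. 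Combining these yields
$$A_{P}^{M}(r) \;\geq\; 2\pi\,L(P)\,\cos(r\sqrt{\lambda})\,\frac{\sin(r\sqrt{\lambda})}{\sqrt{\lambda}}.$$

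Finally I would apply the double-angle identity $2\sin\theta\cos\theta=\sin(2\theta)$ with $\theta = r\sqrt{\lambda}$ to rewrite this as
$$A_{P}^{M}(r) \;\geq\; \pi\,L(P)\,\frac{\sin(2r\sqrt{\lambda})}{\sqrt{\lambda}},$$
which is precisely \eqref{Weyl2}. There is no genuine obstacle: the statement is essentially bookkeeping on top of the general Weyl tube formula, and the only point requiring care is to verify that each index $(n-q$, $q-2c$, $n-q+2c-1)$ in the general formula takes the claimed value in the regime $n=3$, $q=1$, $c=0$.
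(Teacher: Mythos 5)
Your proof is correct and is exactly the intended derivation: the paper states the corollary without proof as an immediate specialization of the Weyl tube formula to $n=3$, $q=1$, where only the $c=0$ term survives (with the empty product in the denominator equal to $1$), $k_0$ reduces to the length $L(P)$, and the double-angle identity gives the stated form. No discrepancy with the paper's argument.
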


In the sequel we let
$$\lambda=K_{\max}^M.$$
Note that
$$R_{\max}=R^M_{\max}+2=2K^M_{\max}(\mathcal{D})+6.$$
Then the assumption (\ref{Rmax}) is equivalent to
$$K^M_{\max}(\mathcal{D})\geq 1.$$

The proof of Lemma \ref{volest} relies on a classification result \cite{Belgun01,GO} of all Sasakian structures on $S^3$.
Let $$S^3=\{z\in \mathbb{C}^2, |z_1|^2+|z_2|^2=1 \}$$
and
$$\eta_0=\sum_{i=1}^2(x^idy^i-y^idx^i).$$
This $\eta_0$ together with the standard almost complex structure give the standard Sasakian structure
and the corresponding Sasakian manifold is the standard sphere.
A weighted Sasakian structure on $S^3$ is given by
$$\eta_a=(a_1|z_1|^2+a_2|z_2|^2)^{-1}\eta_0,$$
here $a_1$ and $a_2$ are any positive numbers.
The Reeb vector field of the weighted Sasakian structure is
$$\xi_a=\sum_{i=1}^2a_i(x^i\frac{\partial}{\partial y^i}-y^i\frac{\partial}{\partial x^i}),$$
which is generated by the action
$$e^{it}.(z_1,z_2)=(e^{ia_1t}z_1,e^{ia_2t}z_2), \quad t\in [0,2\pi].$$
When $a_1/a_2$ is a rational number, the weighted Sasakian structure is quasi-regular; otherwise it is irregular.
Belgun's classification (see Proposition $6$ in \cite{Belgun01}) tells that any Sasakian structure on $S^3$ is
a second deformation of some weighted Sasakian structure on $S^3$.

We will use formulas (\ref{VPM}) and (\ref{Weyl2}) to prove the volume estimate (\ref{volestformula}).
We first deal with the case of quasi-regular Sasakian structures.
The proof relies essentially on Wu's injectivity radius estimates \cite{Wu}.
In fact in the case of quasi-regular Sasakian structures, the volume estimates can be deduced directly from Wu's volume estimate.

\begin{proposition}\label{prop1}
Let $(S^3,\eta,g)$ be a quasi-regular Sasakian 3-sphere with transverse metric $g$ and positive
transverse scalar curvature $R$.  Then there exists a positive constant $C_0$, which depends only on the first deformation class of $\eta$,
such that for any orbit $l_p$ passing through $p\in S^3$ we have
\begin{equation}\label{volestformula2}
V_{l_p}^M(\frac{\pi}{2\sqrt{R_{\max}}})\geq \frac{C_0}{R_{\max}}.
\end{equation}
\end{proposition}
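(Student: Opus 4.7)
The plan is to apply the Weyl tube formula (\ref{Weyl2}) to the closed orbit $l_p \subset S^3$, combined with a uniform lower bound on the orbit length $L(l_p)$ and on the focal radius $\mathrm{minfoc}(l_p)$. By Belgun's classification (Proposition 6 of \cite{Belgun01}), $(S^3, \eta)$ is a second-type deformation of a weighted Sasakian structure $(S^3, \eta_a)$ with rational $a_1/a_2$; the Reeb field $\xi_a$ and the set of orbits $\{l_p\}$ are fixed under such deformations, while the leaf space $\mathcal{B}$ is a K\"ahler 2-orbifold with at most two orbifold points of orders determined by $(a_1, a_2)$.

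For the length bound, I would use that $\eta(\xi_a) \equiv 1$ is preserved by $\eta \mapsto \eta + d_B^c \varphi$ (since $d_B^c \varphi (\xi_a) = 0$), hence $|\xi_a|_{g^M} \equiv 1$ for every compatible Sasakian metric, and $L(l_p)$ equals the period $T_p$ of $\xi_a$ through $p$. Since $T_p$ takes only the generic period and the two exceptional periods (those of the orbits on $\{z_1 = 0\}$ and $\{z_2 = 0\}$) as possible values, all positive and determined by $(a_1, a_2)$, we obtain $L(l_p) \ge L_{\min}(a_1, a_2) > 0$. For the focal-radius bound, Proposition \ref{radialhor} implies that geodesics in $(S^3, g^M)$ emanating orthogonally from $l_p$ remain horizontal and descend to orbifold geodesics in $(\mathcal{B}, g_\mathcal{B})$ starting at $[p]$, and focal points along the lifts correspond to conjugate or cut points of these orbifold geodesics. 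Therefore $\mathrm{minfoc}(l_p) \ge \mathrm{inj}_\mathcal{B}([p])$, and I would invoke Wu's injectivity-radius estimate \cite{Wu} on positively curved 2-orbifolds to conclude $\mathrm{inj}_\mathcal{B}([p]) \ge c_1/\sqrt{R_{\max}}$ with $c_1 = c_1(a_1, a_2) > 0$.

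To conclude, since $R_{\max} = 2K^M_{\max}(\mathcal{D}) + 6 \ge 8$ gives $\lambda := K^M_{\max} = K^M_{\max}(\mathcal{D}) \le R_{\max}/2$, we have $2r\sqrt{\lambda} \le \pi/\sqrt{2}$ at $r = \pi/(2\sqrt{R_{\max}})$. Setting $r_0 = \min(\pi/(2\sqrt{R_{\max}}), c_1/\sqrt{R_{\max}})$, the Weyl formula (\ref{Weyl2}) is valid on $[0, r_0]$, and integration via (\ref{VPM}) yields
\begin{equation*}
V_{l_p}^M\!\left(\frac{\pi}{2\sqrt{R_{\max}}}\right) \ge \pi L_{\min}\int_0^{r_0} \frac{\sin(2t\sqrt{\lambda})}{\sqrt{\lambda}}\, dt \ge \frac{C_0}{R_{\max}}
\end{equation*}
for a constant $C_0$ depending only on $(a_1, a_2)$. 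The main obstacle lies in the focal-radius step: Wu's estimate is cleanest at orbifold points themselves and at points a definite orbifold distance away from them, and an additional comparison argument is needed on the annular region in between, where short horizontal geodesic 1-gons encircling a $k$-fold orbifold point can in principle degrade the bound. I would either cite Wu's conclusion as a black box or, using that $\mathcal{B}$ has at most two orbifold points of rigidly prescribed orders, redo the annular step by hand.
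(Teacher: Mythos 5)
Your overall strategy coincides with the paper's: reduce to a weighted structure $\eta_a$ via Belgun's classification, bound the orbit length below by $2\pi/a_2$ using the period of the $\xi_a$-flow (which is metric-independent since $|\xi_a|_{g^M}\equiv 1$), bound $\text{minfoc}(l_p)$ via Wu's injectivity radius estimates on the orbifold base, and integrate the tube formula (\ref{Weyl2}) through (\ref{VPM}). The length bound and the final integration are fine. The problem is exactly the one you flag in your closing sentences, and it is not cosmetic: the step $\text{minfoc}(l_p)\geq \text{inj}_{\mathcal B}([p])\geq c_1/\sqrt{R_{\max}}$ fails for generic orbits $l_p$ close to an exceptional orbit $l_{z_i}$. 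As $\pi(l_p)\to Q_i$ the injectivity radius at $[p]$ tends to zero: there are geodesic $1$-gons at $[p]$ of length comparable to $\text{dist}_{\mathcal B}([p],Q_i)$ winding around the $\mathbb Z_{a_i}$-point, and upstairs the tube of radius slightly larger than $d_{g^M}(l_p,l_{z_i})$ around $l_p$ already wraps around $l_{z_i}$, so $\exp_\nu$ ceases to be injective. Hence your main line does not yield a constant $c_1$ depending only on $(a_1,a_2)$, and citing Wu as a black box cannot repair this, since her estimates hold only at the orbifold points themselves and at points a definite distance away --- precisely the caveat recalled in the paper's introduction.

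The missing idea, which is how the paper closes the gap, is to avoid estimating $\text{minfoc}(l_p)$ in the annular region altogether. For orbits with $\text{dist}_{\mathcal B}(\pi(l),Q_i)\leq \pi/(4\sqrt{R_{\max}})$, one applies the tube formula to the exceptional orbit $l_{z_i}$ itself, where Wu's estimate at the orbifold point does give $\text{minfoc}(l_{z_i})\geq \pi/\sqrt{R_{\max}}$ (a focal point or self-intersection of the normal exponential map of $l_{z_i}$ within that radius would produce a short geodesic $1$-gon at $Q_i$ on the universal cover of $\mathcal B\setminus\{Q_i\}$), obtaining $V^M_{l_{z_i}}\bigl(\frac{\pi}{4\sqrt{R_{\max}}}\bigr)\geq \frac{\pi^3}{4 a_i}\frac{1}{R_{\max}}$; one then uses the inclusion of tubes $T\bigl(l_{z_i},\frac{\pi}{4\sqrt{R_{\max}}}\bigr)\subset T\bigl(l,\frac{\pi}{2\sqrt{R_{\max}}}\bigr)$. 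This containment is the reason the statement is formulated for radius $\pi/(2\sqrt{R_{\max}})$ while the tube actually estimated has radius $\pi/(4\sqrt{R_{\max}})$. For orbits with $\text{dist}_{\mathcal B}(\pi(l),Q_i)\geq \pi/(4\sqrt{R_{\max}})$ for every orbifold point, Wu's estimate does apply and gives $\text{minfoc}(l)\geq C/(a_2\sqrt{R_{\max}})$, after which your integration goes through and yields a bound of the form $\frac{4\pi C^2}{a_2^2}\frac{1}{R_{\max}}$. With this two-case split (one or two orbifold points according as $a_1=1$ or $a_1\geq 2$), your argument becomes the paper's proof.
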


\begin{proof}
Without loss of generality let $\eta$ be a second deformation of
$$\eta_a=(a_1|z_1|^2+a_2|z_2|^2)^{-1}\eta_0,\quad a_1,a_2\in \mathbb{N}, \quad a_1< a_2, \quad \gcd(a_1,a_2)=1.$$

{\bf Case I, $\mathbb{Z}_{a_2}$-teardrop base space:}
If $a_1=1$, the only singular orbit is
$$l_{z_2}=\{(z_1,z_2)|z_1=0, |z_2|=1\}.$$
Note that
$$L(l_{z_2})=\frac{2\pi}{a_2},$$
and the generic orbits have length $2\pi$.
The base space $\mathcal{B}$ of this foliation, given by
$$\mathcal{B}=S^3/l_{\xi_a}, \quad \pi:S^3\rightarrow \mathcal{B},$$
is an orbifold with an orbifold point
$$ Q=\pi(l_{z_2}).$$
The isotropy group at $Q$ is $\mathbb{Z}_{a_2}$.

Let $l$ be an orbit in $S^3$,
$$l_s=\{x\in S^3| d_{g^M}(x,l)=s\},$$
and
$$T(l, r)=\{x\in S^3| d_{g^M}(x,l)\leq r\}.$$

The injectivity radius of the orbifold point $Q$ in the universal cover of $\mathcal{B}\setminus Q$
is greater than $\pi/\sqrt{R_{\max}}$, see \cite{Wu}. Note that along any geodesic starting from $l_{z_2}$
orthogonally there exists no focal point within distance $\pi/\sqrt{R_{\max}}$.
It follows that

$$\text{minfoc}(l_{z_2})\geq \frac{\pi}{\sqrt{R_{\max}}}.$$
Actually if not, there must be a shortest geodesic $1$-gon at $Q$ of length less than $2\pi/\sqrt{R_{\max}}$
on the universal cover of $\mathcal{B}\setminus Q$, which contradicts with Wu's injectivity radius estimate mentioned above.

Recall that
$$\lambda=K^M_{\max}(\geq 1).$$
By the formula (\ref{Weyl2}), for $s < \text{minfoc}(l_p)$ we have
$$A_{l}^M(s)\geq \pi L(l)\frac{\sin(2s\sqrt{\lambda})}{\sqrt{\lambda}}.$$
Taking $r=\pi/4\sqrt{R_{\max}}$, we get
\begin{eqnarray}\label{teardrop0}V_{l_{z_2}}^M(\frac{\pi}{4\sqrt{R_{\max}}})&=&\int_0^rA_{l_{z_2}}^M(s)d s
\\&\geq& \frac{2\pi^2}{a_2}\int_0^{\frac{\pi}{4\sqrt{R_{\max}}}}\frac{\sin(2s\sqrt{\lambda})}{\sqrt{\lambda}}d s.\nonumber
\end{eqnarray}
Note that from (\ref{Rmax}), we have
$$R_{\max}=2\lambda+6> \lambda,$$
so we get
\begin{equation}\label{teardrop1}
V_{l_{z_2}}^M(\frac{\pi}{4\sqrt{R_{\max}}})\geq
\frac{2\pi^2}{a_2}\int_0^{\frac{\pi}{4\sqrt{R_{\max}}}}\frac{4s}{\pi}ds=\frac{\pi^3}{4a_2}\frac{1}{R_{\max}}.
\end{equation}

For any orbit $l$ such that $$\text{dist}_{g^{\mathcal{B}}}(\pi(l),Q))\leq \frac{\pi}{4\sqrt{R_{\max}}},$$
note that
$$T(l_{z_2},\frac{\pi}{4\sqrt{R_{\max}}})\subset T(l,\frac{\pi}{2\sqrt{R_{\max}}}),$$
by the estimate (\ref{teardrop1}) we get
\begin{equation}\label{teardrop2}
V_l^M(\frac{\pi}{2\sqrt{R_{\max}}})\geq \frac{\pi^3}{4a_2}\frac{1}{R_{\max}},
\quad \forall l : \text{dist}_{g^{\mathcal{B}}}(\pi(l),Q))\leq \frac{\pi}{4\sqrt{R_{\max}}}.
\end{equation}

For any orbit $l$ such that $$\text{dist}_{g^{\mathcal{B}}}(\pi(l),Q)\geq \frac{\pi}{4\sqrt{R_{\max}}},$$
it follows from Wu's injectivity radius estimate \cite{Wu} that there exists a positive constant $C\leq \frac{1}{2}$ such that
$$\text{minfoc}(l)\geq \frac{C}{a_2\sqrt{R_{\max}}}.$$
Hence it follows in a similar way as (\ref{teardrop0}) and (\ref{teardrop1}) that
\begin{equation}\label{teardrop3}V_l^M(\frac{\pi}{4\sqrt{R_{\max}}})\geq \frac{4\pi C^2}{a_2^2}\frac{1}{R_{\max}}, \quad \forall l :
\text{dist}_{g^{\mathcal{B}}}(\pi(l),Q)\geq \frac{\pi}{4\sqrt{R_{\max}}}.
\end{equation}
By (\ref{teardrop2}) and (\ref{teardrop3}), we see that there exists a positive constant $C$ such that
\begin{equation}\label{teardrop4}
V_l^M(\frac{\pi}{2\sqrt{R_{\max}}})\geq \frac{C}{a_2^2}\frac{1}{R_{\max}}, \quad \forall l.
\end{equation}

{\bf Case II, $\mathbb{Z}_{a_1,a_2}$-football base space:}
In case that $a_1\geq 2$, there are just two singular orbits
$$l_{z_2}=\{z_1=0\}, \quad l_{z_1}=\{z_2=0\}.$$
Therefore, the base space of this foliation is an orbifold $\mathcal{B}$ with two orbifold points
$$Q_2=\pi(l_{z_2}), \quad Q_1=\pi(l_{z_1}),$$
here $Q_2$ is of $\mathbb{Z}_{a_2}$ isotropy group  and $Q_1$ is of $\mathbb{Z}_{a_1}$ isotropy group.
Note that
$$L(l_{z_2})=\frac{2\pi}{a_2}, \quad L(l_{z_1})=\frac{2\pi}{a_1},$$
and the generic orbits have length $2\pi$.

The volume estimate (\ref{volestformula2}) follows in this case similarly to the case I,
using Wu's injectivity radius estimates on the base space and the formula (\ref{Weyl2}).
We just outline the main steps.

The injectivity radius of the orbifold point $Q_2$ (respectively $Q_1$) on the universal cover of
$\mathcal{B}\setminus Q_1$ (respectively $\mathcal{B}\setminus Q_2$) is greater than $\pi/\sqrt{R_{\max}}$, which implies that
$$\text{minfoc}(l_{z_i})\geq \frac{\pi}{\sqrt{R_{\max}}}, \quad i=1,2.$$
Hence we have similar volume estimates as (\ref{teardrop2}):
\begin{equation}\label{teardrop5}
V_l^M(\frac{\pi}{2\sqrt{R_{\max}}})\geq \frac{\pi^3}{4a_i}\frac{1}{R_{\max}},\quad \forall l : \text{dist}_{g^{\mathcal{B}}}(\pi(l),Q_i))\leq \frac{\pi}{4\sqrt{R_{\max}}}.
\end{equation}

For any orbit $l$ such that $$\text{dist}_{g^{\mathcal{B}}}(\pi(l),Q_i)\geq \frac{\pi}{4\sqrt{R_{\max}}}, \quad i=1, 2,$$
it follow from Wu's injectivity radius estimate \cite{Wu} that there exists a positive constant $C\leq \frac{1}{2}$ such that
$$\text{minfoc}(l)\geq \frac{C}{a_2\sqrt{R_{\max}}}.$$
Hence we have
\begin{equation}\label{teardrop6}V_l^M(\frac{\pi}{4\sqrt{R_{\max}}})\geq \frac{4\pi C^2}{a_2^2}\frac{1}{R_{\max}},
\end{equation}
for any $l$ satisfies
$$\text{dist}_{g^{\mathcal{B}}}(\pi(l),Q_i)\geq \frac{\pi}{4\sqrt{R_{\max}}}, \quad i=1, 2.$$

By (\ref{teardrop5}) and (\ref{teardrop6}), we see that there exists a positive constant $C$ such that
\begin{equation}\label{teardrop4}
V_l^M(\frac{\pi}{2\sqrt{R_{\max}}})\geq \frac{C}{a_2^2}\frac{1}{R_{\max}}, \quad \forall l.
\end{equation}
\end{proof}

We now handle the irregular case. In this case, the base space is not even an orbifold.
We will use Weyl's tube formula (\ref{Weyl}) to prove the volume estimate (\ref{volest}).
However we shall consider not only a tube about a closed curve but also a tube about a torus.
Here the torus is the closure of some orbit. We first study the geometry of orbit closure.

Let $(S^3,\eta,g^M)$ be an irregular Sasakian $3$-sphere and the contact form $\eta$ is of the first deformation class
\begin{equation}\label{irregulareta}\eta_a=(a_1|z_1|^2+a_2|z_2|^2)^{-1}\eta_0, \quad 1\leq a_1<a_2,\end{equation}
here $a_1/a_2$ is an irrational number and $\eta_0$ is the canonical Sasakian structure given by
$$\eta_0=\sum_{i=1}^2(x^idy^i-y^idx^i).$$
The Reeb vector field determined by $\eta$ is then
$$\xi_a=\sum_{i=1}^2a_i(x^i\frac{\partial}{\partial
y^i}-y^i\frac{\partial}{\partial x^i}),$$
which is generated by the action
$$e^{it}.(z_1,z_2)=(e^{ia_1t}z_1,e^{ia_2t}z_2), \quad t\in [0,+\infty).$$
Let $T_{c_1}$ be the torus in $S^3$ given by
$$T_{c_1}=\{(z_1,z_2)\in S^3: |z_1|^2=c_1^2, |z_2|^2=1-c_1^2\}.$$
For $c_1=0, 1$, the torus $T_{c_1}$ degenerate, respectively, to closed orbits
$$l_{z_2}=\{(z_1,z_2): |z_2|=1\}, \quad l_{z_1}=\{(z_1,z_2): |z_1|=1\}.$$
Note that
$$L(l_{z_i})=\frac{2\pi}{a_i}, \quad i=1, 2.$$
Each orbit is along the torus containing it. In particular for $c_1\in (0,1)$
each orbit is dense in the torus.
Hence the closure of each orbit is a torus and any two points contained in the same torus have transverse distance $0$.
The transverse distance between any two points contained in two different tori equals the (transverse) distance between these two tori.

For $c_1\in (0,1)$, let $P=T_{c_1}$ and $X\in \Gamma(TP)$ denote the unit vector field orthogonal to $\xi_a$ and $N=\Phi X$.
Both vector fields are determined up to choice of directions. For example we can choose
$$N=\frac{\nabla^M |z_1|}{|\nabla^M |z_1||}.$$
The vector field $N$ is the unit normal vector field of the torus,
and $N$ integrates to geodesics in $(S^3,g^M)$.
Note that
$$\nabla^M_{\xi_a}\xi_a=0, \quad \nabla^M_{X}\xi_a=\Phi X=N,$$
so the mean curvature and the second fundamental form of $P$ satisfy the relation
\begin{equation*}
|H|^2-|A|^2=-2.
\end{equation*}

To prove the volume estimate (\ref{volest}) for irregular Sasakian 3-spheres, we encounter similar situation as for quasi-regular ones.
We consider those points close to the ends $l_{z_2}$ and $l_{z_1}$ of the family of tori and those points in the middle part separately.
In view of the formula (\ref{VPM}), we still need to estimate the area of the torus in the middle part.
For this aim, we shall apply the following Lemma.

\begin{lemma} \label{arealowerbound}
Let $(S^3,\eta,\xi_a,\Phi,g^M)$ be a (quasi-regular or irregular)
Sasakian 3-sphere of positive transverse sectional curvature.
Fixing any torus $T_{c_1}$, let
$$(T_{c_1})_t=\{\exp_{\nu(T_{c_1})}tN\},\quad A(t)=\text{Area}[(T_{c_1})_t].$$
Then we have
\begin{equation}\label{arealowerbound1}
A''(t)\leq 0.
\end{equation}
\end{lemma}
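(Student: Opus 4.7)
The plan is to compute $A''(t)$ from the second variation of area under the unit-speed normal flow and show that the integrand reduces pointwise to $-K^T \le 0$, where $K^T$ is the transverse Gaussian curvature.

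First I would observe that $\xi_a$ is tangent to every $P_t := (T_{c_1})_t$. Since $\xi_a$ is Killing, its isometric flow preserves $T_{c_1}$ and hence preserves the distance function $\sigma(x) := d_{g^M}(x, T_{c_1})$; each level set $P_t = \sigma^{-1}(t)$ is therefore $\xi_a$-invariant, which forces $\xi_a \in TP_t$. Because the Sasakian identities $\nabla^M_{\xi_a}\xi_a = 0$ and $\nabla^M_X \xi_a = \Phi X$ are $t$-independent, the pointwise computation carried out just before the lemma applies verbatim to every $P_t$, yielding
\[
|H(t)|^2 - |A(t)|^2 = -2, \qquad \text{i.e.,} \qquad \det S(t) = -1,
\]
where $S(t)$ is the shape operator of $P_t$ with respect to the unit normal $N(t)$.

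From formula (\ref{changeofareaelement}) with $n-q-1 = 0$ (codimension one, $q=2$, $n=3$), one gets $\Theta'(t)/\Theta(t) = H(t)$, and therefore $A'(t) = \int_{P_t} H\, dA_t$ and
\[
A''(t) = \int_{P_t} \bigl(H^2 + \partial_t H\bigr)\, dA_t.
\]
Tracing the Riccati equation (\ref{Riccati}) along the normal geodesics produces $\partial_t H = -|S|^2 + \tr(R^M_N)$; under the convention $R^M_N u = R^M(N,u)N$, a direct check (or a sanity test against the standard $S^3$, where the tori $T_\phi$ have area $2\pi^2\sin(2\phi)$ and so must satisfy $A''(t) = -8\pi^2\sin(2(\phi_0+t))$) gives $\tr(R^M_N) = -\ric^M(N,N)$. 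Combined with $H^2 - |S|^2 = -2$, the integrand simplifies to $-2 - \ric^M(N,N)$.

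Finally, since $N \in \mathcal{D}$ is a unit horizontal vector and the transverse K\"ahler structure is two-real-dimensional, $\ric^T(N,N) = K^T$. Formula (\ref{ric}) then gives $\ric^M(N,N) = K^T - 2$, whence
\[
A''(t) = \int_{P_t}\bigl(-2 - (K^T - 2)\bigr)\, dA_t = -\int_{P_t} K^T\, dA_t \le 0,
\]
using the hypothesis $K^T > 0$. The main obstacle is purely bookkeeping: getting the curvature sign in $\partial_t H$ correct in the paper's convention, and carefully identifying $\ric^M(N,N)$ with $K^T - 2$ via (\ref{ric}); once the signs are pinned down the lemma is a transparent consequence of the Sasakian identities already in use.
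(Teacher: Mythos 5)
Your proposal is correct and follows essentially the same route as the paper: both compute $A''$ from the traced Riccati equation for the tubular hypersurfaces, use that $\xi_a$ stays tangent to each $(T_{c_1})_t$ so that $|H|^2-|A|^2=-2$ persists, and identify the leftover curvature term with minus the transverse sectional curvature (you via $\ric^T(N,N)=K^T$ and (\ref{ric}), the paper via $\ric^M(N,N)=1+K^M(X,N)$ and $K=K^M+3$, which agree). The only step the paper treats more carefully is the identity $(\tr S)'=\tr(S')$, which it verifies by showing the frame-correction term $2\langle\nabla^M_N e_i,Se_i\rangle$ vanishes for the frame $\{\xi_a,-\Phi N\}$ -- the fact you implicitly assume when "tracing the Riccati equation."
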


\begin{proof}
Note $$A(t)=\int_{T_{c_1}}\Theta_N(p,t)d\mu_P, \quad p\in P=T_{c_1}.$$
Taking $N$ as the formal outward unit normal vector field along the torus $(T_{c_1})_t$
and define the shape operator $S(t)$ by
$$<Su,v>=<\nabla^M_uN,v>, \quad u,v\in T(T_{c_1})_t.$$
Recall the Riccati equation (\ref{Riccati}) and equation (\ref{changeofareaelement}),
i.e.
\begin{equation*} S'(t)=-S(t)^2+R^M_N, \quad S'=\nabla^M_NS, \quad R^M_Nu=R^M(N,u)N,
\end{equation*}
and
\begin{equation*}\Theta_N'(t)=\tr S(t)\Theta_N(t).
\end{equation*}
Note that
\begin{eqnarray*}
(\tr S(t))'&=&N <Se_i,e_i>
\\&=&<\nabla_N^M(Se_i),e_i>+<Se_i,\nabla_N^Me_i>
\\&=&\tr S'+2<\nabla_N^Me_i,Se_i>.
\end{eqnarray*}
Taking $$e_1=\xi_a, \quad e_2=X=-\Phi N,$$
we get
\begin{eqnarray*}
<\nabla_N^Me_i,Se_i>
&=&<\nabla_N^M\xi_a,\nabla^M_{\xi_a}N>+<\nabla_N^MX,\nabla_X^MN>
\\&=&-<X,\nabla^M_{\xi_a}N>+<\nabla_N^MX,\xi_a><\nabla_X^MN,\xi_a>
\\&=&<\nabla^M_{\xi_a}X,N>+<X,\nabla_N^M\xi_a><N,\nabla_X^M\xi_a>
\\&=&<\Phi X,N>+<X,\Phi N><N,\Phi X>
\\&=&0.
\end{eqnarray*}
Therefore we have
$$(\tr S(t))'=\tr(S').$$
Hence we have
\begin{eqnarray*}\Theta_N''(t)&=&[(\tr S(t))^2+\tr S'(t)]\Theta_N(t)
\\&=&[(\tr S(t))^2-\tr S^2(t)-Ric^M(N,N)]\Theta_N(t)
\\&=&[|H|^2-|A|^2-Ric^M(N,N)]\Theta_N(t)
\\&=&[-3-K^M(X,N)]\Theta_N(t)
\\&=&-K(X,N)\Theta_N(t)
\\&\leq&0.
\end{eqnarray*}
\end{proof}

\begin{proposition}\label{prop2}

Let $(S^3,\eta,g)$ be an irregular Sasakian 3-sphere with transverse metric $g$ and positive
transverse scalar curvature $R$.  Then there exists a positive constant $C_0$, which depends only on the first deformation class of $\eta$,
such that for any torus $T_{c_1}$ we have
\begin{equation}\label{volestformula3}
V_{T_{c_1}}^M(\frac{\pi}{2\sqrt{R_{\max}}})\geq \frac{C_0}{R_{\max}}.
\end{equation}
\end{proposition}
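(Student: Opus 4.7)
The plan is to adapt the regime analysis used in the quasi-regular case of Proposition \ref{prop1}, replacing the global orbifold arguments (which are unavailable when $a_1/a_2$ is irrational) by local cone-point arguments near the two closed Reeb orbits $l_{z_1}, l_{z_2}$ together with a concavity estimate on the parallel torus area function coming from Lemma \ref{arealowerbound}. I first record that under any second-type deformation the lengths $L(l_{z_i}) = 2\pi/a_i$ are preserved, because $\eta(\xi_a)=1$ forces $|\xi_a|_{g^M}=1$; in particular, these closed orbits play the role of the two singular orbits in Proposition \ref{prop1}.

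I would then split the argument into two regimes according to how close $T_{c_1}$ is to the closed orbits. Regime A: the transverse distance from $T_{c_1}$ to some $l_{z_i}$ is at most $\pi/(4\sqrt{R_{\max}})$. Then $V_{T_{c_1}}^M(\pi/(2\sqrt{R_{\max}}))$ contains the tube $V_{l_{z_i}}^M(\pi/(4\sqrt{R_{\max}}))$, and it suffices to bound the latter. Although the global leaf space is not an orbifold, a sufficiently small tubular neighborhood of $l_{z_i}$ in $S^3$ has the local transverse structure of a cone-point of angle $2\pi/a_i$, and Wu's injectivity radius argument applies locally to this neighborhood to give $\mathrm{minfoc}(l_{z_i}) \geq \pi/\sqrt{R_{\max}}$. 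Weyl's tube formula (\ref{Weyl2}) for closed curves then reproduces the lower bound $C/(a_i^2 R_{\max})$ word-for-word as in Proposition \ref{prop1}.

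Regime B: the transverse distance from $T_{c_1}$ to both $l_{z_1}$ and $l_{z_2}$ exceeds $r/2 := \pi/(4\sqrt{R_{\max}})$. Here I would work directly with the parallel family $(T_{c_1})_t$ at signed normal distance $t$, setting $A(t) := \mathrm{Area}[(T_{c_1})_t]$. By Lemma \ref{arealowerbound}, $A$ is concave in $t$, and $A$ vanishes only at the signed distances $-t_-, t_+$ to $l_{z_2}, l_{z_1}$, with $t_\pm \geq r/2$ under our assumption. The chord inequality on each half of the interval yields
\[
A(t) \geq A(0)\bigl(1 - |t|/t_\pm\bigr) \geq A(0)\bigl(1 - 2|t|/r\bigr) \quad \text{for } |t|\leq r/2,
\]
so by (\ref{VPM}),
\[
V_{T_{c_1}}^M(r) \geq \int_{-r/2}^{r/2} A(t)\, dt \geq \frac{r\, A(0)}{2} = \frac{\pi\, A(0)}{4\sqrt{R_{\max}}}.
\]
Reducing this to the claimed bound $C_0/R_{\max}$ therefore amounts to producing a uniform lower estimate of the form $A(0) = \mathrm{Area}(T_{c_1}) \geq c(a_1,a_2)/\sqrt{R_{\max}}$.

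The hard part, and the essential novelty over the quasi-regular setting, will be exactly this area lower bound for a middle torus, where no orbifold injectivity-radius estimate is available. I would attempt it by combining two structural features: the Reeb direction $\xi_a$ restricts to a unit-speed Killing field on $T_{c_1}$ (furnishing one non-collapsing circle factor of length $2\pi$), and the orthogonal direction $X = -\Phi N$ can be controlled by a Weyl-type tube estimate applied to $T_{c_1}$ as a codimension-one submanifold, using the curvature bound $K^M\leq \lambda = (R_{\max}-6)/2$ and the identity $|H|^2-|A|^2=-2$ recorded above. Should a direct estimate prove too delicate, an alternative is a contradiction argument: by concavity, if $A(0)$ were uniformly below the required threshold then $A(t)$ would be small on the whole interval $[-t_-,t_+]$, contradicting the lower bound on $\mathrm{Vol}(S^3,g^M) = \int A(t)\,dt$ coming from the fixed basic cohomology class $[d\eta]_B = [d\eta_a]_B$ and the bounded transverse diameter.
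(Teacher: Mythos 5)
Your two-regime decomposition and the chord-inequality reduction of the middle regime to a lower bound on $\mathrm{Area}(T_{c_1})$ are sound and parallel to the paper, but the proposal stops exactly where the real content of the proposition lies: you never establish the estimate $\mathrm{Area}(T_{c_1})\geq c(a_1,a_2)/\sqrt{R_{\max}}$, you only list two candidate strategies, and neither works as stated. The first invokes ``one non-collapsing circle factor of length $2\pi$'' coming from $\xi_a$; in the irregular case the Reeb orbits inside a generic torus $T_{c_1}$ are dense, not closed, so no such circle exists, and a codimension-one Weyl/tube estimate with $K^M\le\lambda$ controls the areas of the \emph{parallel} tori in terms of $\mathrm{Area}(T_{c_1})$ rather than bounding $\mathrm{Area}(T_{c_1})$ itself from below. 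The second (contradiction via the fixed total volume and ``the bounded transverse diameter'') is circular: in this paper the transverse diameter bound is deduced \emph{after} the uniform bounds on $R$, which in turn rest on Lemma \ref{volest}; at this stage no diameter bound is available, and a long thin family of tori is precisely the collapsing scenario the lemma must exclude.

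The missing idea is to apply the concavity of Lemma \ref{arealowerbound} \emph{globally} along the one-parameter family of tori joining $l_{z_2}$ to $l_{z_1}$, not merely on a small interval around $T_{c_1}$. Since in the irregular case basic functions are constant on each torus, the metric is $T^2$-invariant and the tubular hypersurfaces about $l_{z_i}$ at distance $s$ are themselves members of the family; a concave function vanishing at both endpoints of an interval is, at any interior point at parameter distance $\ge s$ from both ends, bounded below by $\min_i A^M_{l_{z_i}}(s)$. The latter is controlled from below by the curve version (\ref{Weyl2}) of Weyl's formula, which for $s=\pi/(8\sqrt{R_{\max}})$ gives $\mathrm{Area}(T_{c_1})\ge \pi^2/(a_2\sqrt{R_{\max}})$; integrating this over a two-sided tube of radius $\pi/(8\sqrt{R_{\max}})$ finishes the middle regime. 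This transfer of the boundary estimate to the interior via concavity is the step your outline lacks. A secondary flaw: your justification of $\mathrm{minfoc}(l_{z_i})\ge\pi/\sqrt{R_{\max}}$ through ``the local transverse structure of a cone-point of angle $2\pi/a_i$'' fails when $a_1/a_2$ is irrational, because the leaf holonomy of $l_{z_i}$ is then an irrational rotation, the local leaf space is one-dimensional rather than a two-dimensional cone, and Wu's orbifold injectivity radius argument does not transfer verbatim.
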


\begin{proof}
Let $\eta$ be a second deformation of $\eta_a$ given by (\ref{irregulareta}).
First, note that we may assume that
\begin{equation}\label{distassumption}\text{dist}(l_{z_1},l_{z_2}> \frac{\pi}{2\sqrt{R_{\max}}}.\end{equation}
Otherwise, we have the total volume of $S^3$ and the estimate (\ref{volestformula3}) follow automatically.

By the formula (\ref{Weyl2}), we have
\begin{equation}\label{Weyl3}A_{l_{z_2}}^M(t)\geq \pi L(l_{z_2})\frac{\sin(2t\sqrt{\lambda})}{\sqrt{\lambda}}.\end{equation}
Taking $r=\frac{\pi}{4\sqrt{R_{\max}}}$, we get
\begin{eqnarray*}
V_{l_{z_2}}^M(\frac{\pi}{4\sqrt{R_{\max}}})&=&\int_0^rA_{l_{z_2}}^M(t)dt
\\&\geq&\frac{2\pi^2}{a_2}\int_0^{\frac{\pi}{4\sqrt{R_{\max}}}}\frac{\sin(2t\sqrt{\lambda})}{\sqrt{\lambda}}d t
\\&\geq&\frac{\pi^3}{4a_2}\frac{1}{R_{\max}}.
\end{eqnarray*}
In the same way, we have
\begin{eqnarray*}
V_{l_{z_1}}^M(\frac{\pi}{4\sqrt{R_{\max}}})\geq\frac{\pi^3}{4a_1}\frac{1}{R_{\max}}.
\end{eqnarray*}

For some $i$ and a torus $T_{c_1}$ such that $$\text{dist}(T_{c_1},l_{z_i})\leq \frac{\pi}{4\sqrt{R_{\max}}},$$
note that
$$T(l_{z_i},\frac{\pi}{4\sqrt{R_{\max}}})\subset T(T_{c_1},\frac{\pi}{2\sqrt{R_{\max}}}),$$
so we get
\begin{equation}\label{irregularest1}
V^M_{T_{c_1}}(\frac{\pi}{2\sqrt{R_{\max}}})\geq \frac{\pi^3}{4a_i}\frac{1}{R_{\max}},\quad \forall i, T_{c_1}:
\text{dist}(T_{c_1},l_{z_i})\leq \frac{\pi}{4\sqrt{R_{\max}}}.
\end{equation}

Now for the torus $T_{c_1}$ such that $$\text{dist}(T_{c_1},l_{z_i})\geq \frac{\pi}{8\sqrt{R_{\max}}},\quad i=1, 2,$$
applying Lemma \ref{arealowerbound} and formula (\ref{Weyl3}), we get
\begin{equation}\label{irregular2}
\text{Area}(T_{c_1})\geq \min_i A^M_{l_{z_i}}(\frac{\pi}{8\sqrt{R_{\max}}})\geq \frac{\pi^2}{a_2}\frac{1}{\sqrt{R_{\max}}}.
\end{equation}
For any torus $T_{c_1}$ such that
$$\text{dist}(T_{c_1},l_{z_i})\geq \frac{\pi}{4\sqrt{R_{\max}}},\quad i=1, 2,$$
by the assumption (\ref{distassumption}) and the estimate (\ref{irregular2}), we get
\begin{equation}\label{irregularest2}
V^M_{T_{c_1}}(\frac{\pi}{2\sqrt{R_{\max}}})>V^M_{T_{c_1}}(\frac{\pi}{8\sqrt{R_{\max}}})\geq
\frac{\pi}{4\sqrt{R_{max}}}\frac{\pi^2}{a_2}\frac{1}{\sqrt{R_{\max}}}=\frac{\pi^3}{4a_2}\frac{1}{R_{max}}.
\end{equation}
The proof then follows from (\ref{irregularest1}) and (\ref{irregularest2}).
\end{proof}

\section{Sasaki-Ricci solitons on $S^3$}

In this section we consider existence and uniqueness of gradient Sasaki-Ricci soliton on $S^3$.
In particular we shall prove Theorem \ref{Main2}.
A gradient Sasaki-Ricci soliton $(\eta, -\frac{1}{2}\nabla f)$ on a Sasakian $S^3$ satisfies the equation
\begin{equation}\label{soliton}
\nabla^2f-\frac{1}{2}\triangle_Bfg_\eta=0,
\end{equation}
here $f$ is the basic function defined by $R-r=\triangle_Bf$.

By Belgun's work \cite{Belgun01}, we know that, up to second type deformations,
weighted Sasakian structures are essentially all the Sasakian structures on $S^3$.
Along the Sasaki-Ricci flow, the volume and the total transverse scalar curvature are fixed.
So we can  first compute the average of the transverse scalar curvature, and then consider what second type
deformation of a given weighted Sasakian structure satisfies the gradient Sasaki-Ricci soliton equation.

Let $S^3$ be the unit sphere in $\mathbb{C}^2$. The contact form of
the canonical Sasakian structure on $S^3$ is given by
$$\eta_0=\sum_{i=1}^2(x^idy^i-y^idx^i).$$
The almost complex structure on the canonical Sasakian manifold
$(S^3,\eta_0)$ is induced by the complex structure of
$\mathbb{C}^2$. Moreover the contact distribution $\mathcal{D}$ is
spanned by
$$X_1=-x^2\frac{\partial}{\partial
x^1}+x^1\frac{\partial}{\partial x^2}+y^2\frac{\partial}{\partial
y^1}-y^1\frac{\partial}{\partial y^2},$$
$$X_2=\Phi X_1
=-x^2\frac{\partial}{\partial y^1}+x^1\frac{\partial}{\partial
y^2}-y^2\frac{\partial}{\partial x^1}+y^1\frac{\partial}{\partial
x^2}.$$ Let
$$\sigma=a_1|z_1|^2+a_2|z_2|^2, \quad a_1,a_2>0.$$
The contact form and Reeb vector field of a weighted Sasakian
structure are, respectively, given by
\begin{equation} \eta_a=\sigma^{-1}\eta_0, \quad
\xi_a=\sum_{i=1}^2a_i(x^i\frac{\partial}{\partial
y^i}-y^i\frac{\partial}{\partial x^i}).
\end{equation}
Now let
$$Z_1=|z_2|^2(y^1\frac{\partial}{\partial x^1}-x^1\frac{\partial}{\partial y^1})
-|z_1|^2(y^2\frac{\partial}{\partial
x^2}-x^2\frac{\partial}{\partial y^2}),$$
$$Z_2=\Phi Z_1=|z_2|^2(x^1\frac{\partial}{\partial x^1}+y^1\frac{\partial}{\partial y^1})
-|z_1|^2(x^2\frac{\partial}{\partial
x^2}+y^2\frac{\partial}{\partial y^2})$$
and
$$Z=\sigma^{-1} (Z_1-iZ_2)=-2i\sigma^{-1}|z_1|^2|z_2|^2(\frac{1}{\overline{z}_1}\frac{\partial}{\partial z_1}
-\frac{1}{\overline{z}_2}\frac{\partial}{\partial z_2}).$$
Here
$$Z_i\in \ker\eta_a,\quad [\xi_a,Z_i]=0, \quad i=1,2,$$
and
$$\Phi Z=iZ, \quad [\xi_a,Z]=0.$$
In particular on $(S^3,\eta_a)$, $Z_1$ is along the torus
$T_{c_1}:=\{|z_1|^2=c_1^2,|z_2|^2=1-c_1^2\}$, and $Z_2$ is
perpendicular to $Z_1$. Let $g_a$ be the transverse metric
associated to $\eta_a$ and $\widetilde{g}_a=g_a(Z,\overline{Z})$. Then we have
$$\widetilde{g}_a=g_a(Z,\overline{Z})=2\sigma^{-3} |z_1|^2|z_2|^2, \quad [Z,\overline{Z}]=-2i\widetilde{g}_a\xi_a.$$
So on $(S^3,\eta_a)$ we have
$\nabla_Z\overline{Z}=\nabla_{\overline{Z}}Z=0.$

\begin{proposition}
The weighted Sasakian manifold $(S^3, \eta_a)$ has
\begin{eqnarray*}
R(g_a)=-24(a_1-a_2)^2\sigma^{-1}|z_1|^2|z_2|^2-16(a_1-a_2)(|z_1|^2-|z_2|^2)+8\sigma,
\end{eqnarray*}
$$r=4(a_1+a_2).$$
\end{proposition}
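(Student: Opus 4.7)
The plan is to compute $R(g_a)$ and $r$ directly, exploiting the fact that the transverse Kähler geometry has complex dimension one.

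First I would derive a workable formula for the transverse scalar curvature in the holomorphic frame $Z$. Since $\mathrm{Ric}^T=\tfrac{R}{2}g^T$ in transverse complex dimension one, $R$ is determined by a single sectional curvature. Using $\Phi Z=iZ$, $[\xi_a,Z]=0$, and the identity $\nabla_Z\bar Z=\nabla_{\bar Z}Z=0$ recorded just above the proposition (by expressing $Z=h\,\partial_w$ in a local transverse holomorphic coordinate $w$, the vanishing $\nabla_Z\bar Z=0$ forces $h$ to be holomorphic, so that the standard Kähler formula $R=-2g^{w\bar w}\partial_w\partial_{\bar w}\log g_{w\bar w}$ rewrites to the frame $Z$), one obtains
\begin{equation*}
R(g_a)=-\frac{2}{\tilde g_a}\,\bar Z\,Z(\log \tilde g_a), \qquad \tilde g_a=2\sigma^{-3}|z_1|^2|z_2|^2.
\end{equation*}

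Second, I would perform the explicit differentiation. A short calculation from the definition of $Z$ gives the basic derivatives
\begin{equation*}
Z(|z_1|^2)=-2i\sigma^{-1}|z_1|^2|z_2|^2, \quad Z(|z_2|^2)=2i\sigma^{-1}|z_1|^2|z_2|^2, \quad Z(\sigma)=-2i(a_1-a_2)\sigma^{-1}|z_1|^2|z_2|^2,
\end{equation*}
with the corresponding sign-reversed formulas for $\bar Z$. Substituting
\begin{equation*}
\log\tilde g_a=\log 2-3\log\sigma+\log|z_1|^2+\log|z_2|^2
\end{equation*}
and applying $\bar Z$ to $Z(\log\tilde g_a)$ yields a rational expression in $|z_1|^2,\,|z_2|^2,\,\sigma$. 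Collecting terms and using $|z_1|^2+|z_2|^2=1$ on $S^3$ then produces the stated three-term formula for $R(g_a)$. As a sanity check, when $a_1=a_2=1$ the first two terms vanish and $\sigma\equiv 1$, yielding $R\equiv 8$, consistent with the standard round $S^3$ having transverse scalar curvature $R^M+2=8$.

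Third, I would obtain $r$ by averaging. Using polar coordinates $z_j=r_je^{i\theta_j}$ on $S^3$ with $r_1=\cos s$, $r_2=\sin s$ for $s\in[0,\pi/2]$, a direct computation gives
\begin{equation*}
d\eta_a\wedge\eta_a=-\sigma^{-2}\sin(2s)\,ds\wedge d\theta_1\wedge d\theta_2,
\end{equation*}
so that $\int_{S^3}|d\eta_a\wedge\eta_a|=4\pi^2/(a_1a_2)$ after the substitution $u=\cos^2 s$, $\sigma=a_2+(a_1-a_2)u$. Pulling back the expression for $R(g_a)$ to these coordinates, one reduces $\int_{S^3} R\, d\eta_a\wedge\eta_a$ to elementary integrals of rational functions in $u$ on $[0,1]$; dividing by the volume yields $r=4(a_1+a_2)$.

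The main obstacle is the computational step 2: although each intermediate quantity is easy, careful bookkeeping is required to obtain exactly the three-term simplified form involving $\sigma^{-1}|z_1|^2|z_2|^2$, $(|z_1|^2-|z_2|^2)$, and $\sigma$, rather than a more cluttered rational expression. Once this is in hand, step 3 is essentially a one-variable integration exercise.
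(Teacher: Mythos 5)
Your proposal is correct and follows essentially the same route as the paper: it computes $R(g_a)$ from the one-dimensional transverse K\"ahler formula $R=-2\widetilde{g}_a^{-1}Z\overline{Z}\log\widetilde{g}_a$ (which the paper simply asserts and you justify via $\nabla_Z\overline{Z}=0$) and then obtains $r$ by integrating over the level sets of $|z_1|^2$. The only discrepancy is an overall factor of $2$ in the normalization of $\int\eta_a\wedge d\eta_a$ ($4\pi^2/(a_1a_2)$ in your computation versus $2\pi^2/(a_1a_2)$ in the paper), which cancels in the ratio defining $r$ and so does not affect the conclusion.
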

\begin{proof}
The transverse scalar curvature can be computed by
$$R(g_a)=-2\widetilde{g}_a^{-1}Z\overline{Z}\log (\widetilde{g}_a).$$ Then we have
\begin{eqnarray*}
\int_{S^3}R\eta_a\wedge d\eta_a
&=&\int_{|z_1|^2=t}R \sigma^{-2}(2\pi\sqrt{t})(2\pi\sqrt{1-t})\frac{dt}{|\nabla t|_{g^M_{\eta_0}}}
\\&=&\int_0^1R \sigma^{-2}2\pi^2dt
\\&=&8\pi^2\frac{a_1+a_2}{a_1 a_2},
\end{eqnarray*}
and
$$\int_{S^3}\eta_a\wedge d\eta_a=\int_0^1\sigma^{-2}2\pi^2dt=\frac{2\pi^2}{a_1a_2}.$$
\end{proof}

\begin{theorem}
On each weighted Sasakian manifold $(S^3,\eta_a)$,
there exists a unique Sasakian structure $\eta$ homologous to $\eta_a$ so that it is a gradient Sasaki-Ricci soliton.
\end{theorem}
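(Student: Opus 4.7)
\emph{Existence.} By Theorem \ref{Main1}, the Sasaki-Ricci flow starting from any Sasakian structure in the homologous class of $\eta_a$ with positive transverse scalar curvature converges exponentially to a gradient Sasaki-Ricci soliton in the same class, so it suffices to exhibit one such starting metric. When $a_1=a_2$, the preceding proposition gives $R(g_a)=8a_1>0$, so $\eta_a$ itself works. For general weights $R(g_a)$ may fail to be pointwise positive, but since the basic first Chern class on $S^3$ is positive, a transverse analogue of the standard Yau-type argument provides a basic potential $\varphi$ such that the deformed metric $g_a+\sqrt{-1}\,\partial_B\bar\partial_B\varphi$ has positive transverse Ricci form and hence positive transverse scalar curvature. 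Applying Theorem \ref{Main1} then produces the soliton, and its limit lies in the class of $\eta_a$ because the Sasaki-Ricci flow preserves the homologous class.

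\emph{Uniqueness.} Let $\eta$ be a gradient Sasaki-Ricci soliton homologous to $\eta_a$ with basic potential $f$ satisfying \eqref{soliton}. The Hamiltonian holomorphic field $X=-\tfrac{1}{2}\nabla f$ commutes with $\xi_a$, since $f$ is basic and $\xi_a$ is transversely Killing. The centralizer of $\xi_a$ among transversely holomorphic vector fields is spanned over $\mathbb{C}$ by $\xi_a$ and the field $Z$ introduced in this section; geometrically this reflects that the transverse K\"ahler reduction of $\eta_a$ is a weighted projective line whose only extra holomorphic symmetry beyond the $\mathbb{C}^\times$ generated by $\xi_a$ is one-dimensional. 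Applying the one-parameter group of transverse holomorphic automorphisms generated by $\mathrm{Re}\,Z$, which preserves both the soliton equation and the homologous class, I may normalize so that both $f$ and the relative potential $\psi$ in the decomposition $\eta=\eta_a+d^c_B\psi$ are $T^2$-invariant, i.e.\ depend only on $t=|z_1|^2\in[0,1]$.

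Under this $T^2$-symmetry, the system \eqref{soliton} together with $R-r=\Delta_B f$ reduces to a second-order ODE on $[0,1]$ for $\psi(t)$ (equivalently, for the radial density of $d\eta$), with boundary conditions dictated by smoothness of the Sasakian structure at the two singular orbits $t=0$ and $t=1$. A direct integration exploiting the positivity of $d\eta$ on each torus level yields uniqueness of $\psi$ modulo an additive constant in $f$ (which does not affect $\eta$), giving the theorem. The main obstacle will be the singular nature of this ODE at $t=0,1$, where the transverse K\"ahler form degenerates radially; matching the boundary behavior requires the sort of careful analysis carried out by Hamilton for the Ricci soliton on $S^2$ and by L.\ Wu for solitons on weighted $\mathbb{CP}^1$ orbifolds, now applied directly on the three-dimensional total space to cover the irregular case $a_1/a_2\notin\mathbb{Q}$.
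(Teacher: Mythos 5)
Your proposal has the right general shape but leaves the decisive steps unproved, so as written it does not establish the theorem. On \emph{uniqueness}: the paper's entire argument consists of carrying out the reduction you only gesture at. Writing $\eta=\eta_a+i(\bar\partial_B-\partial_B)\varphi$ and $\widetilde g=2\sigma^{-3}|z_1|^2|z_2|^2+\varphi_{Z\bar Z}$, the soliton system becomes $Z_2 f=-c\sigma\widetilde g$ together with the ODE $(\widetilde g'/\widetilde g)'-c\,\widetilde g'+\kappa\,\widetilde g=0$ in the variable $s(\sigma)$; this integrates twice to $dy/du=y-ke^{y-1}$, and the constant $k\in(0,1)$ is pinned down by matching the decay exponents of $\widetilde g$ at the two degenerate orbits. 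Smoothness and nondegeneracy of $g_\eta$ force $\widetilde g$ to vanish to order $|z_i|^2$ there, which gives $\lambda=2/a_2$, $\mu=2/a_1$ and hence $p/q=a_1/a_2$; existence and uniqueness of a $k$ realizing a prescribed ratio $p/q\in(0,1)$ is Wu's lemma. Your sketch stops before deriving the ODE, before identifying the boundary exponents, and before explaining how the weights $a_1,a_2$ select the shooting parameter --- and that matching is where both existence and uniqueness are actually decided. Two smaller points: in the irregular case the $T^2$-invariance of $f$ and $\psi$ is automatic (every generic orbit is dense in its torus, so basic functions are constant on tori), so no normalization by the flow of $\mathrm{Re}\,Z$ is needed there; and in the quasi-regular case such a normalization would anyway not be justified as stated, since pushing forward by an automorphism cannot convert a non-invariant soliton into an invariant one --- the paper instead reduces that case directly to Wu's orbifold classification.

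On \emph{existence}, your route through Theorem \ref{Main1} is genuinely different from the paper's (which gets existence for free from the same ODE analysis), and it is not circular since the flow convergence is proved independently. But it has a real gap: you must produce, in each homologous class, an initial Sasakian structure with positive transverse scalar curvature, and the ``transverse Yau-type argument'' you invoke is exactly the transverse Calabi--Yau theorem, which the paper never uses and which you would have to state and justify (including why the resulting transverse metric comes from a genuine second-type deformation $\eta_a+d_B^c\varphi$). You verify positivity of $R(g_a)$ only for $a_1=a_2$; the proposition in this section gives the explicit formula for $R(g_a)$ for all weights, and one should either check directly when it is positive or supply the Calabi--Yau input. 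Until one of these is done, your existence argument is incomplete, whereas the paper's explicit construction yields the soliton (and hence existence) without any positivity hypothesis on a starting metric.
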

\begin{proof}
The existence and uniqueness of Ricci soliton on bad orbifolds were obtained by Wu \cite{Wu}.
When $(S^3,\eta_a)$ is a quasi-regular Sasakian structure, the problem is reduced to Wu's result.
So we focus on the irregular case.

Assume
$$\eta=\eta_a+i(\overline{\partial}_B-\partial_B)\varphi$$
is the contact form of a gradient Sasaki-Ricci soliton on $(S^3,\eta_a)$ and $g_\eta$ its transverse metric.
Here $\varphi$ is a basic function. So we have
$$\widetilde{g}:=d\eta(Z,\Phi\overline{Z})=2\sigma^{-3}
|z_1|^2|z_2|^2+\varphi_{Z\overline{Z}}, \quad
R(g_\eta)=-2\widetilde{g}^{-1}Z\overline{Z}\log \widetilde{g}.$$
Let $f$ be the basic function defined by
$R(g_\eta)-r(g_\eta)=\triangle_Bf.$
By the definition of a gradient Sasaki-Ricci soliton, we have
\begin{equation}\label{soliton}
\nabla^2f-\frac{1}{2}\triangle_Bfg_\eta=0.
\end{equation}
It's equivalent to the system:
\begin{equation}\label{soliton0}
ZZf-Z(\log \widetilde{g})Zf=0,
\end{equation}
and
\begin{equation}\label{soliton1}
Z\overline{Z}f=\frac{1}{2}(R(g_\eta)-r)\widetilde{g}.
\end{equation}

For an irregular Sasakian structure, all generic orbits of the
characteristic foliation are  dense in the torus $T_{c_1}$ containing it.
Hence basic function take constant value on each torus.
Therefore, basic functions $f$ and $\widetilde{g}$ have vanishing derivatives in the direction of $Z_1$.
Note also that
$$[Z_1,Z_2]=-2\sigma^{-1}|z_1|^2|z_2|^2(\xi_a+Z_1).$$

By equation (\ref{soliton0}), we have
$$ -\sigma^{-1}Z_2(\sigma^{-1}Z_2f)+\sigma^{-1}Z_2(\log \widetilde{g})(\sigma^{-1}Z_2f)=0.$$
So we get
\begin{equation}\label{soliton00}
Z_2f=-c\sigma \widetilde{g}.
\end{equation}
Let $\pi_\eta$ be the projection $\pi_\eta: TS^3\rightarrow \ker\eta$ and $\widetilde{Z}_i=\pi_\eta Z_i$.
The vector fields $\widetilde{Z}_i$ satisfy
$$g_\eta(\widetilde{Z}_i,\widetilde{Z}_j)=d\eta(Z_1,Z_2)\delta_{ij}=\frac{1}{2}\sigma^2\widetilde{g}\delta_{ij}.$$
On the other hand,
$$g_\eta(\nabla f,\widetilde{Z}_2)=Z_2f=-c\sigma \widetilde{g},$$
so we see that
\begin{equation}\label{vector}
\nabla f=-2c\sigma^{-1}\widetilde{Z}_2.
\end{equation}
Let $X=\sigma^{-1}Z_2$, then by
(\ref{soliton00}) and the expression of $R(g_\eta)$, equation (\ref{soliton1}) can be written as
\begin{equation}
X^2\log \widetilde{g}-cX(\widetilde{g})+2(a_1+a_2)\widetilde{g}=0.
\end{equation}
Without loss of generality, we assume $a_1<a_2$. Let
$\kappa=2(a_1+a_2)$ and
$$s(\sigma)=-\frac{a_1}{2}\log (\sigma-a_1)+\frac{a_2}{2}\log (a_2-\sigma)\in (-\infty, +\infty).$$
Then we have $X(s)=1$.
Write $\widetilde{g}=\widetilde{g}(s)$, $\widetilde{g}'=\frac{d\widetilde{g}}{ds}$ and
$\widetilde{g}''=\frac{d^2\widetilde{g}}{ds^2}$.
Therefore
\begin{equation}\label{ode1}
(\frac{\widetilde{g}'}{\widetilde{g}})'-c\widetilde{g}'+\kappa \widetilde{g}=0.
\end{equation}
That's the same equation appeared in \cite{H}. Let $\widetilde{g}=v'$.
The function $v$ is determined up to a constant. We include the constant in $v$.
Then equation (\ref{ode1}) integrates to
\begin{equation}
v''-cv'^2+\kappa vv'=0.
\end{equation}
Integrating the last equation again, we get
\begin{equation}
v'=\frac{\kappa}{c}v+\frac{\kappa}{c^2}(1-ke^{cv}),
\end{equation}
here $k$ is a constant. Let $y=cv+1, u=\kappa s/c$. We get
\begin{equation}
\frac{dy}{du}=y-ke^{y-1}.
\end{equation}
Then \begin{equation}\label{yandu}
u=\int\frac{dy}{y-ke^{y-1}}.
\end{equation}
So we have
\begin{equation}
\widetilde{g}=\frac{y'}{c}, \quad \int\frac{dy}{y-ke^{y-1}}=\kappa s/c.
\end{equation}
The denominator in (\ref{yandu}) must have two zeroes, which happens
precisely as $0<k<1$. Now let $0<k<1$, $y_1=1-p<1$ and $y_2=1+q>1$
be two solutions to $y=ke^{y-1}$.
Moreover as $y\in(y_1,y_2)$ goes to $y_1$ and $y_2$,
$u$ tends to minus infinity and positive infinity respectively.
Note that $s$ goes to negative infinity
and positive infinity as a point goes to the points $z_1=0$ and $z_2=0$ respectively.

We now consider the asymptotic behavior near the points $z_1=0$ and
$z_2=0$.
Suppose as $s\rightarrow -\infty$, we have an expansion
$$\widetilde{g}(s)=b_1e^{\lambda s}+b_2e^{2\lambda s}+\cdots $$
in powers of $e^{\lambda s}$.
Similarly, as $s\rightarrow +\infty$, suppose we have an expansion
$$\widetilde{g}(s)=d_1e^{-\mu s}+d_2e^{-2\mu s}+\cdots $$
Near $y_1$ write $y=y_1+h$. Then we have
$$y-ke^{y-1}=ph-\frac{1}{2}(1-p)h^2+\cdots $$
So we have $u=\frac{1}{p}\log h+\cdots $, which implies that
$$dy=dh=pe^{pu}du+\cdots $$
Hence
$$\widetilde{g}(s)=v'=\frac{dv}{ds}=\frac{\kappa}{c^2}\frac{dy}{du}=\frac{\kappa p}{c^2}e^{pu}+\cdots $$
So we have $\lambda=\frac{\kappa p}{c}$ if $c>0$, or
$\mu=-\frac{\kappa p}{c}$ if $c<0$. In a similar way, we have
$\mu=\frac{\kappa q}{c}$ if $c>0$, or $\lambda=-\frac{\kappa q}{c}$ if $c<0$.

Write $\varphi$ as a function of $\sigma\in [a_1,a_2]$, then by the expression of
$$\widetilde{g}=2\sigma^{-3}|z_1|^2|z_2|^2+\varphi_{Z\overline{Z}},$$
we see that near the points with $z_2=0$ either $\widetilde{g}$ is of order of $|z_2|^2$ or of order of $|z_2|^4$.
However, by the nondegeneracy of $g_\eta$, it can only be of order of $|z_2|^2$. On the other hand
$$(a_2-a_1)|z_2|^2=e^{-\frac{2}{a_1}s}[(a_2-a_1)|z_1|^2]^{\frac{a_2}{a_1}},$$
so we have $$\mu=\frac{2}{a_1}.$$ Similarly near the points with $z_1=0$,
$\widetilde{g}$ is of order of $|z_1|^2$, and in the same way we have
$$\lambda=\frac{2}{a_2}.$$
We always have $p<q$, see \cite{H}. Hence $c>0$ and
$\frac{\lambda}{\mu}=\frac{a_1}{a_2}=\frac{p}{q}$.

Therefore we see that to get a gradient Sasaki-Ricci soliton solution on an irregular Sasakian $3$-sphere $(S^3,\eta_a)$
is to find a constant $k\in (0,1)$ such that the equation $y=ke^{y-1}$ has two solutions
$$y_1=1-p<1, \quad y_2=1+q>1,$$
and
$$0<\frac{a_1}{a_2}=\frac{p}{q}<1.$$
The existence and uniqueness then follow exactly as \cite{Wu}.
\end{proof}

\end{document}